\theoremstyle{plain}                              
\newtheorem{theorem}{Theorem}[section]
\newtheorem{cor}[theorem]{Corollary}
\newtheorem*{conj}{Conjecture}
\newtheorem{proposition}[theorem]{Proposition}
\newtheorem{lemma}[theorem]{Lemma}
\theoremstyle{definition}                         
\newtheorem*{defn}{Definition}
\newtheorem{example}{Example}[section]
\newtheorem*{remark}{Remark}
\theoremstyle{remark}                             
\numberwithin{equation}{section}
\newcommand{\R}{\mathbb{R}}                     
\newcommand{\C}{\mathbb{C}}                     
\newcommand{\N}{\mathbb{N}}                     
\newcommand{\Ca}{\mathop{\mathrm{Cap}}} 
\newcommand{\leb}{\mathop{\mathrm{Leb}}} 
\newcommand{\muo}{\mu_{[0,1]}}
\newcommand{\hf}{\widehat{f}}
\newcommand{\wmu}{\widehat{\mu}}
\newcommand{\wnu}{\widehat{\nu}}
\newcommand{\eps}{\varepsilon}
\newcommand{\Proba}{\mathcal{P}}
\newcommand{\hV}{\widehat{V}}
\newcommand{\wR}{\widetilde{R}}
\DeclareMathOperator{\supp}{supp}
\begin{document}

\title[Phase transition of capacity for the uniform $G_{\delta}$-sets]{Phase transition of capacity for the uniform $G_{\delta}$-sets}


\author[V. Kleptsyn, F. Quintino]{Victor Kleptsyn, Fernando Quintino}
\thanks{The first author was partially supported by the project ANR Gromeov (ANR-19-CE40-0007), as well as 
by the Laboratory of Dynamical Systems and Applications NRU HSE, 
of the Ministry of science and higher education of the RF grant ag. No 075-15-2019-1931. 
The second author was supported by NSF grants DMS-1855541 and DMS-1700143. }

\address{Univ Rennes, CNRS, IRMAR - UMR 6625, F-35000 Rennes, France.}
\email{victor.kleptsyn@univ-rennes1.fr}
\address{Department of Mathematics, University of California, Irvine}
\email{fquintin@uci.edu}

\subjclass[2010]{Primary: 31A15, 31C15.
  Secondary: 28A12.}
\date{\today}
\dedicatory{}
\keywords{Logarithmic capacity, phase transition, parametric Furstenberg theorem.}

\begin{abstract}
We consider a family of dense $G_{\delta}$ subsets of $[0,1]$, defined as intersections of unions of small uniformly distributed intervals, and study their capacity. Changing the speed at which the lengths of generating intervals decrease, we observe a sharp phase transition from full to zero capacity. Such a $G_{\delta}$ set can be considered as a toy model for the set of exceptional energies in the parametric version of the Furstenberg theorem on random matrix products.

Our re-distribution construction can be considered as a generalization of a method applied by Ursell in his construction of a counter-example to a conjecture by Nevanlinna. Also, we propose a simple Cauchy-Schwartz inequality-based  proof of related theorems by Lindeberg and by Erd\"os and Gillis. 
\end{abstract}

\maketitle

\tableofcontents

\section{Introduction}

\subsection{The setting}
Given a compactly supported measure $\mu$ on $\C$, one defines its (Coulomb) \emph{energy} as a double integral:
\begin{equation}\label{eq:I-mu}
I(\mu):=\iint -\log|z-w| \, d\mu(z) d\mu(w).
\end{equation}
The logarithmic capacity of a bounded subset $X\subset \C$ is then defined by minimizing this energy:
\begin{defn}
Let $\Proba(X)$ be the space of probability measures, supported on a (bounded) set $X\subset \C$. 
The \emph{logarithmic capacity} of this set is 
$$
\Ca(X) := \exp(-\inf\{ I(\mu) \mid \mu\in \Proba(X)\}).
$$
\end{defn}
Physicists think of $\mu$ as being a charge distribution on $\C$ and $I(\mu)$ its total energy (see~\cite[pg.~56]{Ra}). There are many tools to measure how thin a set is such as the Lebesgue measure or the Hausdorff dimension. Capacity gauges how far a set is from being a polar set. 


Namely, a \textit{polar} set is traditionally defined (see, for example,~\cite{He}) as a set, on which some subharmonic function~$u$ takes value~$-\infty$.
And it is alternatively defined (\cite[pg.~56]{Ra}) as being of zero capacity, that is, being a subset $E\subset \C$ such 
that $I(\mu)=\infty$ for every non-trivial Borel measure with compact support contained in~$E$.

In most of the literature (\cite{Ra}, \cite[Appendix A]{Si}) this definition is applied to compact subsets of~$\C$. However, it is also 
studied quite extensively for general Borel sets, and this is also the setting in which we will be working in the present paper. 
Our main focus will be the study of ``uniform'' $G_\delta$-sets on the interval~$[0,1]$. That is, given a (sufficiently fast) decreasing sequence $r_n\to 0$, for every $n$ we consider a union of $n$ equally spaced intervals of length~$r_n$:
\begin{equation}\label{eq:V-def}
V_n:= \bigcup_{j=0}^{n-1} J_{j,n}, 
\end{equation}
where $J_{j,n}$ is an open interval of length $r_n$ centered at $c_{j,n} =\frac{j+(1/2)}{n}$:
\begin{align}\label{d.uniform}
J_{j,n}:=(c_{j,n}-\frac{r_n}{2},c_{j,n}+\frac{r_n}{2}), \quad c_{j,n}=\frac{2j+1}{2n}, \quad j=0,1,\dots, n-1.
\end{align}
See Fig.~\ref{fig:G-delta}.
\begin{figure}[!h!]
\includegraphics{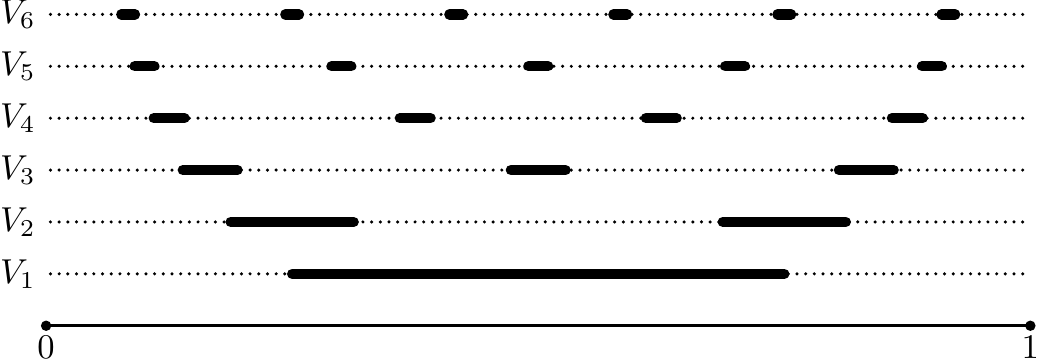}
\caption{Sets $V_n$}\label{fig:G-delta}
\end{figure}

Then we define the \emph{uniform $G_\delta$-set}~$S$, corresponding to the sequence~$r_n$, by
\begin{equation}\label{eq:S-def}
S:=\bigcap_{m=1}^\infty\bigcup_{n=m}^\infty V_n;
\end{equation}
it is immediate to see that~$S$ is indeed a $G_\delta$-subset of~$[0,1]$. 

Our goal is now to study the properties of the set~$S$. Once~$r_n$ goes to~$0$ faster than any power of~$n$, this set is of zero Hausdorff dimension. However, this does not imply anything for its capacity~--- and one can consider the logarithmic capacity as a ``finer'' instrument to describe its properties. 

Such an example is interesting for us for two reasons. First, considering different decrease speed 
for the lengths~$r_n$, we observe a sharp phase transition: while for a fast decrease this set is of zero capacity, for a slower one it turns out to be of \emph{full} capacity (that is, equal to the capacity of~$[0,1]$ itself). Second, such a situation, a $G_{\delta}$-set generated by exponentially small intervals, can be considered as a model case for the set of exceptional energies in the parametric version of the Furstenberg theorem.

In the paper~\cite[Section 1.2]{GK}, the authors have considered the parametric version of a Furstenberg theorem, describing the behaviour to the study of a product 
$$
T_{n,\omega,a}=A_{\omega_n}(a)\dots A_{\omega_1}(a)
$$
of random i.i.d. matrices~$A_{\cdot}(a)\in SL(2,\R)$, depending on a parameter $a$, taking values in some interval $J\subset \R$. 

Under some assumptions, including the individual Furstenberg theorem for every parameter value, it was shown in~\cite[Theorem~1.5]{GK}, that though almost surely for Lebesgue-almost all $a\in J$ one has 
$$
\lim_{n\to\infty} \frac{1}{n} \log \|T_{n,\omega,a}\| = \lambda_F(a)>0,
$$
for the parameters from some  random exceptional subset of parameters $S_e(\omega)$ this equality is violated. Moreover, for the parameters belonging to some (smaller) $G_{\delta}$-set $S_0(\omega)$ one gets
$$
\lim_{n\to\infty} \frac{1}{n} \log \|T_{n,\omega,a}\| =0.
$$

The set $S_e(\omega)$ (and thus $S_0(\omega)$) in~\cite{GK} were shown to have zero Hausdorff dimension. However, the question of their capacity is still open.

Due to their nature, these sets are very similar to those considered in this paper: they are obtained as countable intersection of unions of exponentially small intervals, that are placed in a (more or less) equidistributed way. 
Our theorem thus can be seen as a strong indication for that the exceptional sets of parameters for random matrix products are also of full capacity. 






\subsection{Statement of results}
Recall that the sets $V_n$ in~\eqref{eq:V-def} are unions of~$n$ intervals of length~$r_n$. 
At the moment, we require only $r_n< \frac{1}{n}$ so that the intervals are pairwise disjoint; 
we will discuss possible speeds of decrease for the sequence~$r_n$ later. 

%

Our first result is an easier version of Theorem~\ref{t.Phase.transition}. It is given to demonstrate the technique and part of the proof will be used latter on.

\begin{theorem}[Subexponential uniform $G_\delta$]\label{t.subexponential}
If the sequence $r_n$ decreases subexponentially, then the corresponding uniform $G_\delta$ set~$S$, defined by~\eqref{eq:S-def}, has full capacity. That is, if $|\log r_n|= o(n)$, then
\begin{align*}
\Ca(S)= \Ca([0,1]).
\end{align*}
\end{theorem}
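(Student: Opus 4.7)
The inclusion $S \subset [0,1]$ gives the trivial upper bound $\Ca(S) \le \Ca([0,1])$, so it suffices to prove the matching lower bound. For every $\eps > 0$ my plan is to construct a Borel probability measure $\mu$ supported on a compact subset of $S$ whose logarithmic energy satisfies $I(\mu) \le I(\mu_{[0,1]}) + \eps$, where $\mu_{[0,1]}$ denotes the equilibrium (arcsine) measure of $[0,1]$; the lower bound on capacity then follows from the definition.

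The measure $\mu$ is built by an iterated \emph{redistribution} procedure along a rapidly growing sequence $n_1 < n_2 < \cdots$ chosen inductively. Start from a piecewise-uniform approximation $\mu_0$ to $\mu_{[0,1]}$ on some fixed finite partition of $[0,1]$, with $I(\mu_0) \le I(\mu_{[0,1]}) + \eps/2$. Given $\mu_{k-1}$ supported on $\bigcap_{j<k}\overline{V_{n_j}}$ and uniform on each of its component intervals, I define $\mu_k$ by replacing, inside every such component interval $J$, the uniform $\mu_{k-1}$-mass on $J$ by the uniform mass of equal total on $J \cap V_{n_k}$. Inductively, $\mu_k$ is supported on $\bigcap_{j \le k}\overline{V_{n_j}}$ and is uniform on each component of $\overline{V_{n_k}}$, so the recursion continues.

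The analytic heart is a \emph{one-step redistribution estimate}: spreading a mass $p$, previously uniform on an interval of length $\ell$, onto the $\sim \ell n$ equally spaced subintervals of length $r_n$ that make up its intersection with $V_n$ increases the self-energy by at most
\begin{equation*}
p^2 \cdot \frac{C\,|\log r_n|}{\ell n} + o_n(1),
\end{equation*}
which follows from a Riemann-sum comparison in which the dominant excess is the sum of the self-energies of the small subintervals. Cross energies with faraway mass change by $o_n(1)$ since the logarithmic kernel is uniformly continuous away from the diagonal while redistribution moves mass by less than $1/n$; a slightly more delicate estimate covers interactions within a single $V_{n_{k-1}}$-component using the lower bound $|x-y| \gtrsim 1/n_k$ on the relevant scale. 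Summing the self-energy estimate over the $\le n_{k-1}$ active components (each of length $r_{n_{k-1}}$ carrying mass $\lesssim 1/n_{k-1}$) gives a total step-$k$ energy increase of order $|\log r_{n_k}|/(n_{k-1}\,r_{n_{k-1}}\,n_k)$ up to lower-order terms. The subexponential hypothesis $|\log r_n|=o(n)$ implies $|\log r_{n_k}|/n_k \to 0$, which provides exactly the slack I need: once $n_{k-1}$ is fixed, I can choose $n_k$ large enough to make the step-$k$ increase less than $\eps/2^{k+1}$, so that $I(\mu_k) \le I(\mu_{[0,1]}) + \eps$ for all $k$.

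By tightness, a subsequence of $(\mu_k)$ converges weakly to a probability measure $\mu_\infty$ supported on the compact set $K := \bigcap_{k\ge 1}\overline{V_{n_k}}$, and by lower semicontinuity of the logarithmic energy $I(\mu_\infty) \le I(\mu_{[0,1]}) + \eps$. Performing the redistribution with slightly shrunk open intervals avoids the countable set of endpoints that might fall outside $S$, ensuring $K \subset S$; hence $\Ca(S) \ge e^{-\eps}\,\Ca([0,1])$, and letting $\eps \to 0$ finishes the argument. The main obstacle I anticipate is the careful bookkeeping of the cross-energy contributions across nested scales: verifying that uniform continuity of $-\log|x-y|$ away from the diagonal yields decay estimates that can be summed over all $k$, and confirming that $|\log r_n|=o(n)$ really is sufficient (and not just necessary) to absorb all the step-$k$ errors simultaneously.
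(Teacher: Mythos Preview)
Your plan is essentially the same as the paper's: iterate a redistribution onto a nested sequence $V_{n_1},V_{n_2},\ldots$, control the step-$k$ energy increase via the hypothesis $|\log r_n|/n\to 0$, pass to a weak limit, and invoke lower semicontinuity of the energy. Two minor remarks. First, your bookkeeping assertion that at stage $k-1$ there are $\le n_{k-1}$ active components each carrying mass $\lesssim 1/n_{k-1}$ is inaccurate once $k\ge 3$: after nesting, the number of surviving $V_{n_{k-1}}$-intervals is only about $n_1 r_{n_1}\cdots n_{k-2}r_{n_{k-2}}\, n_{k-1}\ll n_{k-1}$, and each carries correspondingly more mass, so your displayed order $|\log r_{n_k}|/(n_{k-1}r_{n_{k-1}}n_k)$ for the total increase is not quite right. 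This does not break the argument, since the self-energy excess at step $k$ is still of the form $C(n_1,\ldots,n_{k-1})\cdot |\log r_{n_k}|/n_k$ and can be made $<\eps/2^{k+1}$ by choosing $n_k$ large once the earlier scales are fixed --- exactly the freedom the paper uses. Second, the paper disposes of the endpoint issue more cheaply than your shrunk-interval device: because $I(\mu_\infty)<\infty$, the limit measure has no atoms, so the countable set $\bigcup_k \partial V_{n_k}$ is $\mu_\infty$-null and $\mu_\infty$ is automatically supported on~$S$.
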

\begin{remark}
As the reader will see, in the proof of this theorem we will not use the fact that \emph{all} the possible denominators $n$ are used in the construction of the set~$S$. Thus, the same conclusion holds for the set $S':=\bigcap_{m=1}^\infty\bigcup_{j=m}^\infty V_{n_j}$, provided that on the subsequence $n_j$ one has $|\log r_{n_j}|= o(n_j)$.
\end{remark}

Theorem \ref{t.subexponential} is already interesting because it shows that there exists a uniform $G_\delta$ set of full capacity. However, its assumption fails  at the decreasing speed that takes place for the random matrices setting, that is exponential. We thus modify it to a more powerful, though more technically complicated, version. This upgraded version is stronger and observe the ``phase transition".

\begin{theorem}[Phase transition]\label{t.Phase.transition}
For $r_n=e^{-n^\alpha}$,
\begin{enumerate}
\item if $\alpha >2$, then $\Ca(S)=0$,
\item if $\alpha <2$, then $\Ca(S)=\Ca([0,1])$.
\end{enumerate}
\end{theorem}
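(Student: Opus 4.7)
I split the theorem into its two implications. For Part (1) I would use a covering bound in the Erd\H os--Gillis / Lindeberg spirit, while Part (2) requires constructing, for every $\eps>0$, a probability measure on $S$ with energy at most $\log 4+\eps$. For $\alpha<1$ this last task is already handled by Theorem~\ref{t.subexponential} (since $|\log r_n|=n^\alpha=o(n)$), so the substantive new content lies in the range $1\le\alpha<2$.

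\textbf{Part (1), $\alpha>2\Rightarrow\Ca(S)=0$.} For every $m$, $S\subset\bigcup_{n\ge m}V_n$ yields a cover of $S$ by intervals of length $r_n=e^{-n^\alpha}$ with multiplicity $n$. Its total log-weight is
\[
\sum_{n\ge m}\frac{n}{|\log r_n|}=\sum_{n\ge m}n^{1-\alpha}\xrightarrow[m\to\infty]{}0\qquad(\alpha>2).
\]
By the Erd\H os--Gillis / Lindeberg polarity criterion --- namely, that a set admitting, for every $\eps>0$, a cover $\{I_k\}$ with $\sum_k 1/|\log|I_k||<\eps$ is polar (the simplified Cauchy--Schwarz-based proof of which is promised in the abstract) --- we conclude $\Ca(S)=0$.

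\textbf{Part (2), $\alpha<2\Rightarrow\Ca(S)=1/4$.} Only $\Ca(S)\ge1/4$ is nontrivial. Fix $\eps>0$, then $\eta>0$ so that the equilibrium measure of $[0,1]$ truncated to $[\eta,1-\eta]$, call it $\nu^\eta$, satisfies $I(\nu^\eta)\le\log 4+\eps/3$ and has bounded density $K(\eta)\lesssim\eta^{-1/2}$. For each $n$ I would introduce the \emph{redistribution} $\nu_n$ of $\nu^\eta$ onto $V_n$: inside each cell $C_j^{(n)}:=[j/n,(j+1)/n]$, replace $\nu^\eta|_{C_j^{(n)}}$ by the uniform (mass-preserving) measure on $J_{j,n}$. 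Splitting the energy into within-cell (``short-range'') and between-cell (``long-range'') pieces, and using $\sum_j\nu^\eta(C_j^{(n)})^2\lesssim K(\eta)/n$ from Cauchy--Schwarz, one obtains the key estimate
\[
I(\nu_n,\nu_m)\le I(\nu^\eta)+C(\eta)\min(n,m)^{\alpha-1}+(\text{negligible cross error}).
\]
For a weighted combination $\mu_k=\sum_{n=N_k}^{M_k}\beta_n\nu_n$ with $\sum_n\beta_n=1$ and the optimal choice $\beta_n\propto n^{1-\alpha}$, bilinearity and Cauchy--Schwarz give $I(\mu_k)\le I(\nu^\eta)+C(\eta)/\sum_{n=N_k}^{M_k}n^{1-\alpha}+o(1)$; since $\sum_n n^{1-\alpha}$ diverges for $\alpha<2$, the middle term vanishes as $M_k\to\infty$, and $I(\mu_k)\le\log 4+\eps$ eventually.

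\textbf{Main obstacle.} The preceding construction yields measures of small energy supported in $\bigcup_{n\ge N_k}V_n$, but their naive weak limit is $\nu^\eta$ --- smooth on $[\eta,1-\eta]$ and \emph{not} supported on $S$. Producing a measure genuinely on $S$ requires the redistribution to be \emph{iterated} in the spirit of Ursell: starting from $\nu^\eta$, redistribute onto $V_{n_1}$; then, within each surviving interval $J_{j,n_1}$, redistribute again onto $V_{n_2}$ (choosing $n_2\gg e^{n_1^\alpha}$ so each $n_2$-cell lies cleanly inside or outside a $J_{i,n_1}$, making straddling a negligible error); iterate. After $k$ stages $\mu^{(k)}$ is supported in $V_{n_1}\cap\cdots\cap V_{n_k}\subset S$, and the per-stage energy increments --- controlled at each scale by the Cauchy--Schwarz/convex-combination gain from the previous paragraph --- should telescope to $I(\mu^{(k)})\le\log 4+\eps$. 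Passage to a weak limit $\mu$ and lower semi-continuity of the energy then yield $I(\mu)\le\log 4+\eps$, hence $\Ca(S)\ge e^{-\log 4-\eps}$; letting $\eps\to0$ finishes the proof. The technical heart of the argument is to show rigorously that the cumulative energy cost of the iterated redistribution genuinely stays bounded: that the scale-by-scale Cauchy--Schwarz gain really does compensate each scale's contribution, uniformly across all the iterations.
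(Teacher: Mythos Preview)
Your Part~(1) is correct and is exactly the paper's argument: the cover $\bigcup_{n\ge m}V_n$ has logarithmic weight $\sum_{n\ge m}n\cdot n^{-\alpha}$, which tends to zero for $\alpha>2$, so Corollary~\ref{t.alpha>2.zero.cap} (the Lindeberg criterion) gives $\Ca(S)=0$.

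For Part~(2) you have correctly identified the two essential ingredients --- averaging over several $V_n$ to suppress the self-interaction term, and iterating to push the support into~$S$ --- but the proposal does not succeed in combining them, and there are two genuine gaps.

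\emph{Gap in the averaging step.} Your cross-term bound $I(\nu_n,\nu_m)\le I(\nu^\eta)+C\min(n,m)^{\alpha-1}+o(1)$ is too weak for the conclusion you draw from it. Plugging it into $I(\mu_k)=\sum_{n,m}\beta_n\beta_m I(\nu_n,\nu_m)$ with $\beta_n\propto n^{1-\alpha}$ over $n\in[N,M]$, one finds $\sum_{n,m}\beta_n\beta_m\min(n,m)^{\alpha-1}\asymp M^{\alpha-1}$ for $1<\alpha<2$, which diverges; the bound $C/\sum_n n^{1-\alpha}$ does \emph{not} follow. What you actually need is that the \emph{off-diagonal} interactions satisfy $I(\nu_n,\nu_m)=I(\nu^\eta)+o(1)$ for $n\ne m$, so only the diagonal $\sum_n\beta_n^2\cdot O(n^{\alpha-1})$ survives. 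This is not automatic over an arbitrary range $[N,M]$: whenever $\gcd(n,m)>1$ the center sets $\{c_{j,n}\}$ and $\{c_{k,m}\}$ can coincide, producing a large interaction. The paper's device is to average only over \emph{primes} $p\in F_m=\{p\ \text{prime}:m\le p<2m\}$; then any two centers are at distance $\ge 1/(2m^2)$, the $V_p$ are pairwise disjoint, and the off-diagonal estimate becomes Lemma~\ref{l.otherinteraction.phase}. With uniform weights over $F_m$ the diagonal contributes $O(m^{\alpha-1})/N_m=O((\log m)/m^{2-\alpha})=o(1)$.

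\emph{Gap in the iteration.} Your described iteration redistributes onto a \emph{single} $V_{n_k}$ at each stage, with $n_{k+1}\gg e^{n_k^\alpha}$. But a single re-distribution onto $V_{n_k}$ adds $\sim n_k^{\alpha-1}$ to the energy (Proposition~\ref{p.redistribution.one.level}), and these increments are not summable for $\alpha\ge 1$. Appealing to ``the Cauchy--Schwarz/convex-combination gain from the previous paragraph'' is circular: that gain came from averaging, and your iteration has none. The fix is to make \emph{each stage} an averaged re-distribution onto $\widehat V_m:=\bigcup_{p\in F_m}V_p$; since this is a finite disjoint union of intervals and the resulting measure has piecewise-continuous density, it can serve as input to the next stage (Lemma~\ref{l.multi.level}), and each stage costs only $o(1)$ in energy. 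Iterating with budget $\eps/2^{k+1}$ and passing to a weak limit then yields a measure on $S$ with energy $\le\log 4+\eps$, exactly as in the proof of Theorem~\ref{t.subexponential}.
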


A good question is what happens when $\alpha=2$? We expect that $S$ will still have full capacity, but to establish that, one would have 
to adjust the averaged re-distribution procedure (see Proposition~\ref{p.redistribution.multi.level}), probably making the proof even 
more technical.

It is interesting to note that part~(1) of Theorem \ref{t.Phase.transition} is a partial case of a more general statement, going back to Erd\"os and Gillis~\cite{E-G} and to Lindeberg~\cite{Lin}. Namely, assume that one is given a continuous [concave] increasing function $h$, defined and positive in some right neighborhood of~$0$; they refer to such a function as a \emph{measuring function}. One can then consider the \emph{$h$-volume} of a set $E\subset \R$, defined as 
$$
m_h(E):=\lim_{\eps \to 0+} \, \inf_{\{(x_j,r_j)_{j\in\N}\} \in \mathcal{I}(E,\eps)} \sum_j h(r_j),
$$
where the infimum is taken over the set $\mathcal{I}(E,\eps)$ of covers of $E$ by balls of diameters less than $\eps$:
$$
\mathcal{I}(E,\eps) = \left\{(x_j,r_j)_{j\in \N} \mid \bigcup_j U_{r_j}(x_j)\supset E, \quad \forall j \quad r_j<\eps \right\}.
$$
In particular, the choice $h(r)=r^{\alpha}$ corresponds to the $\alpha$-Hausdorff measure of the set~$E$. They were considering a particular choice of $h_0(r):=\frac{1}{|\log r|}$, and their theorem links the $h_0$-volume (the \emph{logarithmic measure}) to the capacity:
\begin{theorem}[P. Erd\"os, J. Gillis, {\cite[p. 187]{E-G}}, generalizing {Lindeberg~\cite[p.~27]{Lin}}]\label{t:E-G}
If for a set~$E$ one has $m_{h_0}(E)<+\infty$, then $\Ca(E)=0$.
\end{theorem}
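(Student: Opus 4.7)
The plan is to show that every compactly supported probability measure $\mu$ on $E$ has infinite logarithmic energy $I(\mu)=+\infty$; by the very definition of capacity this forces $\Ca(E)=0$. Suppose for contradiction that $I(\mu)<\infty$ for some probability $\mu$ on $E$. Because $E$ is bounded, the integrand $-\log|z-w|$ is bounded on the ``far'' region $\{|z-w|\geq 1\}$, while on the ``near'' region it is nonnegative. Hence the truncated energy
$$
I_\eps(\mu):=\iint_{|z-w|<\eps} -\log|z-w|\,d\mu(z)\,d\mu(w)
$$
is finite for every $\eps<1$, and the dominated convergence theorem (the integrand is dominated by $(-\log|z-w|)_{+}\cdot\indicator_{|z-w|<1}\in L^1(\mu\otimes\mu)$) yields $I_\eps(\mu)\to 0$ as $\eps\to 0^+$. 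This is the first half of the argument.

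The second half is a matching lower bound on $I_\eps(\mu)$ coming from the covering hypothesis. Set $M:=m_{h_0}(E)+1$. For every sufficiently small $\eps>0$ pick a cover $E\subset\bigcup_j U_{r_j}(x_j)$ with $r_j<\eps/2$ and $\sum_j 1/|\log r_j|\leq M$, and disjointify to obtain Borel pieces $E_j\subset U_{r_j}(x_j)$ with $\mu(E_j)=:p_j$, so that $\sum_j p_j=1$. For $z,w\in E_j$ we have $|z-w|\leq 2r_j<\eps$ and, provided $\eps$ is small enough, $-\log|z-w|\geq |\log(2r_j)|\geq\tfrac12|\log r_j|$. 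Summing the diagonal blocks,
$$
I_\eps(\mu)\;\geq\;\sum_j\iint_{E_j\times E_j} -\log|z-w|\,d\mu\,d\mu\;\geq\;\tfrac12\sum_j p_j^2|\log r_j|.
$$

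The Cauchy--Schwarz step, applied to the trivial splitting $p_j=\bigl(p_j\sqrt{|\log r_j|}\bigr)\cdot\bigl(1/\sqrt{|\log r_j|}\bigr)$, now gives
$$
1=\Bigl(\sum_j p_j\Bigr)^2\leq\Bigl(\sum_j p_j^2|\log r_j|\Bigr)\Bigl(\sum_j\frac{1}{|\log r_j|}\Bigr)\leq M\sum_j p_j^2|\log r_j|,
$$
so that $I_\eps(\mu)\geq 1/(2M)$ uniformly in all small enough $\eps$, in direct contradiction with $I_\eps(\mu)\to 0$. The only routine obstacle is matching the scales~$\eps$ and $r_j$ so that the elementary inequality $|\log(2r_j)|\geq \tfrac12|\log r_j|$ is valid; everything else reduces to a single application of Cauchy--Schwarz. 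The same template, with $|\log r|$ replaced by $1/h(r)$ for any suitably regular measuring function~$h$, should reproduce the more general Lindeberg/Erd\"os--Gillis-type statements.
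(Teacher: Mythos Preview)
Your proof is correct and follows essentially the same route as the paper's: cover $E$ by small balls, disjointify, lower-bound the near-diagonal part of the energy by $\sum_j p_j^2|\log r_j|$, apply Cauchy--Schwarz to get a uniform positive lower bound, and contradict $I_\eps(\mu)\to 0$. The only cosmetic differences are that the paper first isolates the Cauchy--Schwarz step as a separate lemma (Lemma~\ref{l:I-ints}) stated for intervals, obtaining $|\log r_k'|$ without your factor $\tfrac12$, and then invokes it in the proof of Theorem~\ref{t:E-G}; your version absorbs the lemma directly into the argument.
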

This result generalizes the previous one by Lindeberg~\cite[p.~27]{Lin}, where zero capacity was established under the assumption of a \emph{zero} logarithmic measure. An alternate proof of Theorem~\ref{t:E-G} was later provided by L.~Carleson in~\cite[Theorem~2]{Car}.

A particular case of this theorem is obtained by considering a set of the form 
$$
\widetilde{S}=\bigcap_m \bigcup_{k\ge m} I_k,
$$
where $I_k$ are intervals of length $r_k'$. Such a construction includes any uniform $G_{\delta}$ set $S$ 
by enumerating all the intervals $J_{i,n}$ and then adding them one by one instead of by groups of~$V_n$.

It is immediate to notice that if the series $\sum_n \frac{1}{|\log r_n'|}=\sum_n h_0(r_n')$ converges, the $m_{h_0}$-volume of the set $\widetilde{S}$ vanishes, thus implying the following corollary (from which the second part of Theorem~\ref{t.Phase.transition} immediately follows):
\begin{cor}\label{t.alpha>2.zero.cap}
If the series $\sum_n \frac{1}{|\log r_n'|}$ converges, then the set $\widetilde{S}$ is of zero capacity.
\end{cor}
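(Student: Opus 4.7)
The plan is to apply Theorem~\ref{t:E-G} directly: I will show that the hypothesis on the series forces the logarithmic measure $m_{h_0}(\widetilde{S})$ to vanish, and in particular to be finite, from which Theorem~\ref{t:E-G} immediately yields $\Ca(\widetilde{S})=0$.

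For the key step, I will use that $\widetilde{S}=\bigcap_m \bigcup_{k\ge m} I_k$ is by definition a limsup set, so that for every $m$ the family $\{I_k\}_{k\ge m}$ covers $\widetilde{S}$. In order to actually feed this collection into the definition of $m_{h_0}$, I first need the diameters $r_k'$ to become arbitrarily small. But convergence of $\sum_n 1/|\log r_n'|$ forces the general term $1/|\log r_n'|$ to tend to zero, hence $|\log r_n'|\to\infty$ and so $r_n'\to 0$. Therefore, given any $\eps>0$, I can choose $m=m(\eps)$ large enough so that $r_k'<\eps$ for every $k\ge m$, and $\{I_k\}_{k\ge m}$ is a cover of $\widetilde{S}$ by intervals of diameter less than $\eps$.

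Plugging this cover into the definition of $m_{h_0}$, I will obtain
$$
m_{h_0}(\widetilde{S})\ \le\ \inf_{m}\sum_{k\ge m} h_0(r_k')\ =\ \inf_m\sum_{k\ge m}\frac{1}{|\log r_k'|}\ =\ 0,
$$
since the tails of a convergent series tend to zero. Thus $m_{h_0}(\widetilde{S})=0<+\infty$, and Theorem~\ref{t:E-G} concludes that $\Ca(\widetilde{S})=0$.

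The argument is essentially a one-liner once Theorem~\ref{t:E-G} is granted; no real obstacle remains. The only point that deserves a line of justification is the verification that $r_n'\to 0$ (needed to respect the scale-$\eps$ constraint in the definition of $m_{h_0}$), and this comes for free from the convergence of the series. In particular, applying this to the uniform $G_\delta$-set $S$ with the intervals $J_{j,n}$ enumerated into a single sequence produces, for $r_n=e^{-n^\alpha}$ with $\alpha>2$, the bound $\sum_{n,j}1/n^\alpha = \sum_n n\cdot n^{-\alpha}<\infty$, which recovers part~(1) of Theorem~\ref{t.Phase.transition} as advertised.
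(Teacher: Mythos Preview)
Your proof is correct and matches the paper's own derivation: immediately before stating the corollary, the paper observes that convergence of $\sum_n h_0(r_n')=\sum_n 1/|\log r_n'|$ forces $m_{h_0}(\widetilde{S})=0$ and then invokes Theorem~\ref{t:E-G}, exactly as you do. The paper additionally supplies in Section~\ref{s.zero.capacity} a self-contained alternative via the Cauchy--Schwarz inequality (Lemma~\ref{l:I-ints}), bounding $I(\mu)\ge \big(\sum_{k\ge m} 1/|\log r_k'|\big)^{-1}$ directly for any $\mu$ supported on $\bigcup_{k\ge m} I_k$, which avoids using Theorem~\ref{t:E-G} as a black box and in fact re-proves it.
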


In the same paper~\cite{E-G}, the following conjecture, going back to Nevanlinna's paper~\cite{Nev}, was mentioned: 
\begin{conj}[{\cite{Nev}; see also \cite[(C), p.~186]{E-G}}] 
If for the function $h$ the integral $\int_{0}^{\bullet} \frac{h(t)}{t} \, dt$ diverges and for a closed set $E$ the $h$-volume $m_h(E)$ is finite, then $\Ca(E)=0$.
\end{conj}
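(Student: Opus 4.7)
Since the statement is posed as a conjecture and the abstract advertises Ursell's counterexample, my plan is first to attempt the most natural proof in order to locate the precise obstruction, and then use it to guide a construction-based disproof.

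I would imitate Carleson's energy argument. For any probability measure $\mu$ on $E$, Fubini and the identity $-\log|z-w|=\int_{|z-w|}^{1}r^{-1}\,dr$ give
\[
I(\mu)=\int_{0}^{1}\frac{M_\mu(r)}{r}\,dr,\qquad M_\mu(r):=(\mu\otimes\mu)\bigl(\{(z,w):|z-w|\le r\}\bigr),
\]
so it suffices to establish a pointwise lower bound $M_\mu(r)\gtrsim h(r)$ for small $r$; combined with the hypothesis $\int_0^{\bullet} h(r)/r\,dr=\infty$ this would force $I(\mu)=\infty$ for every such $\mu$, hence $\Ca(E)=0$. At scale $r$ the covering hypothesis provides a cover $\{B(x_j,r)\}_{j=1}^{N(r)}$ of $E$ with $N(r)\,h(r)\lesssim m_h(E)$, and from $\sum_j\mu(B(x_j,r))\ge 1$ one extracts a single ball carrying $\mu$-mass $\gtrsim h(r)/m_h(E)$.

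The main obstacle is that this yields only the \emph{maximum} among cover balls, not the \emph{average} $M_\mu(r)=\int\mu(B(z,r))\,d\mu(z)$. A measure $\mu$ could concentrate overwhelmingly outside the heaviest ball, so most of its mass contributes nothing to the integrand, and no refinement of this purely covering-theoretic argument seems to close the gap unless $h$ is comparable to the Frostman threshold $h_0(r)=1/|\log r|$, for which the Frostman bound $\mu(B(\cdot,r))\lesssim 1/|\log r|$ is sharp and the Erd\H{o}s--Gillis conclusion follows. For $h$ genuinely different from $h_0$, covering data alone is too coarse to constrain the local distribution of $\mu$.

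Accepting the likely failure of the conjecture, I would switch to a disproof strategy based on a uniform-$G_\delta$-type set as in this paper. One allows $r_n$ to be a general decreasing sequence (not necessarily of the form $e^{-n^\alpha}$), chooses it so that the re-distribution machinery of Proposition~\ref{p.redistribution.multi.level} still produces a near-equilibrium measure of finite logarithmic energy on the resulting set $S$ (yielding $\Ca(S)>0$), and simultaneously selects $h$ so that $\sum_n n\,h(r_n)<\infty$ (forcing $m_h(S)=0$ via the covers $\bigcup_{n\ge m}V_n$) while $\int_0^{\bullet} h(r)/r\,dr=\infty$. The delicate point, and the heart of Ursell's construction, is that these conditions can only be met jointly by a non-homogeneous, ``bursty'' sequence $r_n$ with occasional sharp drops that make the $h$-cover cheap while leaving enough interval mass at generic scales for the re-distribution procedure to yield a finite-energy probability measure on~$S$.
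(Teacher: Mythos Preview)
You correctly recognize that this is a \emph{false} conjecture, and the paper does not attempt to prove it; it cites Ursell's 1937 disproof for closed sets and then establishes the stronger Theorem~\ref{t:example} for $G_\delta$ sets. Your disproof sketch is in the right spirit, but two points of comparison are worth noting.

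First, the paper's construction for Theorem~\ref{t:example} is simpler than what you outline. Given any measuring function $h$ with $1/|\log r|\neq O(h(r))$, one extracts a sequence $r_j\to 0$ along which $h(r_j)\,|\log r_j|<4^{-j-1}$, and then sets $n_j:=\bigl\lfloor\sqrt{|\log r_j|/h(r_j)}\bigr\rfloor$. This single choice threads both requirements at once: $n_j h(r_j)<2^{-j}$ forces $m_h(\widetilde S)=0$, while $|\log r_j|/n_j<2^{-j}$ puts us back in the \emph{subexponential} regime, so the elementary one-level redistribution of Lemma~\ref{l.iter} (via the remark after Theorem~\ref{t.subexponential}) already yields full capacity. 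There is no need for the multi-level averaging of Proposition~\ref{p.redistribution.multi.level}, nor for a ``bursty'' sequence with engineered sharp drops; the sparseness of the subsequence $\{n_j\}$ does all the work.

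Second, and more substantively: the conjecture as stated concerns \emph{closed} sets $E$, and your proposed $G_\delta$ construction (like the paper's Theorem~\ref{t:example}) does not produce one. The paper is explicit that its contribution is the non-closed extension; for the closed case it defers to Ursell, whose argument builds a Cantor-type compact set. If your aim is to disprove the conjecture exactly as written, you would need to pass from the $G_\delta$ set to a compact subset retaining positive capacity, or follow Ursell's original Cantor construction directly.
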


In 1937, H.D. Ursell disproved this conjecture, showing that it is false for all functions $h$ except those, for which the conjecture is implied by Theorem~\ref{t:E-G} above. 

The same construction that we use for the proof of Theorem~\ref{t.subexponential} (that can be seen as an extension of Ursell's approach) 
allows to show that for non-closed sets $E$ this conjecture fails even stronger:


\begin{theorem}\label{t:example}
Let $h$ be a measuring function, such that  $\frac{1}{|\log r|}\neq O(h(r))$ as~$r\to 0+$. Then 
there exists a $G_{\delta}$-dense subset $S\subset [0,1]$ with $m_h(S)=0$ and \emph{full} capacity $\Ca(S)=\Ca([0,1])$.
\end{theorem}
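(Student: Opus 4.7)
The plan is to realize $S$ as a uniform $G_\delta$-set built along a carefully sparsified sequence of levels, so that the slack provided by the hypothesis $\tfrac{1}{|\log r|}\ne O(h(r))$ lets the generating radii decrease both slowly enough for the subsequence version of Theorem~\ref{t.subexponential} (cf.\ the Remark following it) to yield $\Ca(S)=\Ca([0,1])$, and fast enough that the covers by the generating intervals themselves give $m_h(S)=0$.

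First I would extract data from the hypothesis: its failure gives a sequence $\rho_k\to 0+$ with $b_k:=h(\rho_k)\,|\log\rho_k|\to 0$, and by passing to a subsequence and relabeling I may assume $b_k\le 1/k^4$, so that $\sum_k\sqrt{b_k}$ converges. Setting $L_k:=|\log\rho_k|$, I would then define
\[
  n_k:=\left\lceil L_k/\sqrt{b_k}\,\right\rceil
\]
and thin the sequence $(\rho_k)$ further so that $(n_k)$ is strictly increasing; the disjointness condition $r_{n_k}:=\rho_k=e^{-L_k}<1/n_k$ needed at level~$n_k$ holds automatically because $e^{-L_k}$ is super-polynomially small in $L_k$. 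The candidate set is then
\[
  S:=\bigcap_{m=1}^{\infty}\bigcup_{k\ge m} V_{n_k},
\]
with $V_{n_k}$ as in~\eqref{eq:V-def}; density of $S$ is immediate because $n_k\to\infty$ makes each $\bigcup_{k\ge m} V_{n_k}$ an open dense subset of $[0,1]$.

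For capacity, $|\log r_{n_k}|/n_k=L_k/n_k\le\sqrt{b_k}\to 0$ gives $|\log r_{n_k}|=o(n_k)$ along the chosen levels, so the Remark after Theorem~\ref{t.subexponential} delivers $\Ca(S)=\Ca([0,1])$. For the $h$-volume, the natural cover $S\subset\bigcup_{k\ge m} V_{n_k}$ consists of intervals of diameters at most $r_{n_m}\to 0$, with total $h$-cost
\[
  \sum_{k\ge m} n_k\,h(r_{n_k})\;\le\;\sum_{k\ge m}\!\Big(\sqrt{b_k}+\tfrac{b_k}{L_k}\Big),
\]
which tends to $0$ as $m\to\infty$, so $m_h(S)=0$. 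The crux is precisely this balancing act: $\Ca(S)=\Ca([0,1])$ wants $n_k\gg L_k$, while $m_h(S)=0$ wants $\sum n_k h(\rho_k)$ finite, and the choice $n_k\sim L_k/\sqrt{b_k}$ fits exactly inside the $\sqrt{b_k}$-margin furnished by the hypothesis.
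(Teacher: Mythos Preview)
Your proof is correct and follows essentially the same approach as the paper: your choice $n_k=\lceil L_k/\sqrt{b_k}\rceil$ is exactly the paper's $n_j=\bigl[\sqrt{|\log r_j|/h(r_j)}\,\bigr]$ (since $L_j/h(r_j)=L_j^2/b_j$), and the two required estimates---summability of $n_k h(\rho_k)$ for $m_h(S)=0$ and $|\log r_{n_k}|=o(n_k)$ for full capacity via the Remark after Theorem~\ref{t.subexponential}---are obtained in the same way. The only cosmetic difference is your decay rate $b_k\le 1/k^4$ versus the paper's $b_j<4^{-j-1}$, and your justification of the disjointness $\rho_k<1/n_k$ (``super-polynomially small in $L_k$'') tacitly uses the concavity of $h$ (so that $h(r)\gtrsim r$ and hence $n_k$ is at most of order $e^{L_k/2}$), a point the paper also leaves implicit.
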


The following remark is quite natural, but requires a formal proof, so we put it as a proposition.
\begin{proposition}\label{p:full}
If $X$ is a subset of interval $J$ such that $\Ca(X)=\Ca(J)$, then given any subinterval $J'\subset J$, one has $\Ca(X\cap J')=\Ca(J')$.
\end{proposition}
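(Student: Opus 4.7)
Monotonicity gives $\Ca(X\cap J')\leq\Ca(J')$, so the task is to construct, for each $\eps>0$, a probability measure $\nu\in\Proba(X\cap J')$ with $I(\nu)\leq V(J')+\eps$, where $V(J'):=-\log\Ca(J')$. The strategy is to reweight a near-extremal sequence of measures on $X$ so as to mimic the equilibrium measure $\mu_{J'}$ of $J'$.

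First, extract $\mu_n\in\Proba(X)$ with $I(\mu_n)\to V(J)$. By weak-$*$ compactness, lower semicontinuity of $I$, and uniqueness of the equilibrium measure $\mu_J$ of $J$ (the arcsine density on $J$), $\mu_n\to \mu_J$ weakly along a subsequence; the additional input $I(\mu_n)\to I(\mu_J)$ upgrades this to convergence in the Hilbertian energy norm on signed measures of finite log-energy, i.e.\ $I(\mu_n-\mu_J)\to 0$.

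Second, let $\rho:=d\mu_{J'}/d(\mu_J|_{J'})$; for $J=[a,b]$ and $J'=[a',b']$ this is $\sqrt{(x-a)(b-x)/((x-a')(b'-x))}$, bounded in the interior of $J'$ with $1/\sqrt{\mathrm{dist}(\cdot,\partial J')}$-singularities at the endpoints. For each $M>0$, put $\rho_M:=\min(\rho,M)\cdot\mathbf{1}_{J'}$ (bounded, supported in $J'$) and $\sigma_n^{(M)}:=\rho_M\,\mu_n$, a sub-probability measure concentrated on $X\cap J'$ since $\mu_n(\C\setminus X)=0$. Letting $n\to\infty$ with $M$ fixed, weak convergence (noting $\mu_J$ has no atom on $\partial J'$) yields $|\sigma_n^{(M)}|\to\int \rho_M\,d\mu_J$, and strong energy convergence applied against the bounded reweighting $\rho_M$ yields $I(\sigma_n^{(M)})\to I(\rho_M\,\mu_J)$. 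Letting then $M\to\infty$, monotone convergence gives $\int\rho_M\,d\mu_J\to \int\rho\,d\mu_J=1$ and $I(\rho_M\,\mu_J)\to I(\mu_{J'})=V(J')$. A diagonal argument extracts normalized $\nu_k\in\Proba(X\cap J')$ with $I(\nu_k)\to V(J')$, so $\Ca(X\cap J')\geq e^{-V(J')}=\Ca(J')$.

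The main obstacle is the double limit in the energies $\iint \rho_M(x)\rho_M(y)\log\tfrac{1}{|x-y|}\,d\mu_n(x)\,d\mu_n(y)$. The singular log-kernel means mere weak convergence $\mu_n\to\mu_J$ is insufficient; one must invoke the bilinear Hilbert-space structure of the log-energy applied to the bounded reweighting $g=\rho_M$, and this is where the strong energy convergence is essential. The subsequent passage $M\to\infty$, where the reweighting acquires integrable singularities, is then handled by monotone convergence using $\rho\in L^1(\mu_J|_{J'})$ and the finiteness of $I(\mu_{J'})$.
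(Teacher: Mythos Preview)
Your approach mirrors the paper's: both reweight a near-extremal sequence $\mu_n\in\Proba(X)$ by a bounded factor supported in $J'$ so as to approximate $\mu_{J'}$, and then remove the cutoff in a second limit. The paper uses a \emph{continuous} weight $\widetilde f_\delta=f_\delta/\rho_J$ (with $f_\delta$ a smoothed arcsine density on $J'$) in place of your truncated Radon--Nikodym derivative $\rho_M=\min(\rho,M)\cdot\mathbf{1}_{J'}$, and it phrases the key passage explicitly as \emph{uniform integrability} of $-\log|x-y|$ with respect to the $\mu_n$; apart from these cosmetic choices the two arguments are the same.

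One step in your write-up deserves care. The assertion ``strong energy convergence applied against the bounded reweighting $\rho_M$ yields $I(\rho_M\mu_n)\to I(\rho_M\mu_J)$'' is true, but not for the reason the Hilbertian language suggests. The energy norm is of $H^{-1/2}$ type, and pointwise multiplication by a merely bounded (here even discontinuous) function is \emph{not} a bounded operator there; so $I(\mu_n-\mu_J)\to 0$ does not by itself give $I\bigl(\rho_M(\mu_n-\mu_J)\bigr)\to 0$. The mechanism that actually works is exactly the uniform integrability the paper isolates: $I(\mu_n)\to I(\mu_J)$ together with $\mu_n\to\mu_J$ weakly forces
\[
\sup_n \iint_{|x-y|<r}(-\log|x-y|)\,d\mu_n(x)\,d\mu_n(y)\xrightarrow[r\to 0]{}0,
\]
and this tail bound is preserved after multiplying the integrand by $\rho_M(x)\rho_M(y)\le M^2$; weak convergence (your discontinuity set $\partial J'$ being $\mu_J$-null) then handles the truncated-kernel part. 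With that reading your argument is complete and matches the paper's.
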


\begin{cor}
In the same setting as Theorem \ref{t.subexponential} or Theorem \ref{t.Phase.transition} for $\alpha <2$, given any interval $J\subset [0,1]$, we have
\begin{align*}
\Ca(J\cap S)= \Ca(J).
\end{align*}
\end{cor}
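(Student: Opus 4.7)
The bound $\Ca(X\cap J')\le \Ca(J')$ is immediate by monotonicity of capacity. For the reverse, the plan is to construct, for every $\eps>0$, a probability measure $\nu\in \Proba(X\cap J')$ with $I(\nu)\le V_{J'}+\eps$, where $V_{J'}:=-\log\Ca(J')$.

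The starting point is a sequence $\mu_n\in \Proba(X)$ with $I(\mu_n)\to V_J:=-\log\Ca(J)$, furnished by the hypothesis $\Ca(X)=\Ca(J)$. Weak-$*$ compactness, lower semicontinuity of $I$, and uniqueness of the arcsine equilibrium measure $\mu_J$ on the interval $J$ give $\mu_n\to \mu_J$ weak-$*$; moreover, the arcsine identity $U^{\mu_J}(z)\equiv V_J$ on $J$ yields
\[
I(\mu_n-\mu_J)=I(\mu_n)-2\int U^{\mu_J}\,d\mu_n+V_J=I(\mu_n)-V_J\too 0,
\]
i.e.\ strong convergence in the logarithmic-energy norm. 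Since $\supp\mu_J=J$, this forces $X$ to be dense in $J$, and hence $X\cap J'$ to be dense in $J'$.

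To construct $\nu$, I would approximate the equilibrium measure $\mu_{J'}$ of $J'$ as follows. Take $N$ Fekete points $y_1,\ldots,y_N\in J'$, whose off-diagonal discrete energy $\frac{2}{N(N-1)}\sum_{i<j}(-\log|y_i-y_j|)$ tends to $V_{J'}$ as $N\to\infty$. Using the density of $X\cap J'$, pick $x_i\in X\cap J'$ with $|x_i-y_i|<\eta$, and then replace each (infinite self-energy) atom at $x_i$ by a small finite-energy probability measure $\sigma_i$ supported on a compact subset of $X\cap J'$ inside a $\delta$-neighborhood of $x_i$. With $\eta,\delta\to 0$ slowly enough relative to $N\to\infty$ — taking into account that the minimum Fekete spacing is $\gtrsim 1/N^2$, so the perturbation cost of the shift is $O(\eta N^2)$, while the self-energies contribute $O(N^{-1}|\log\delta|)$ — the total energy of $\nu=\frac1N\sum_i \sigma_i$ tends to $V_{J'}$.

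The main obstacle is the mollification step: one needs that $X\cap J'$ contains compact sets of positive capacity inside arbitrarily small sub-intervals of $J'$, which is essentially a ``local'' version of the statement we are proving and risks circularity. A natural way around this is to invoke Choquet capacitability to replace $X$ by compact subsets $K_m\subset X$ with $\Ca(K_m)\to \Ca(J)$, and then use the strong convergence of the equilibrium measures $\mu_{K_m}\to \mu_J$ (together with the arcsine identity $U^{\mu_J}\equiv V_J$) to extract quantitative density of $K_m\cap J'$ in $J'$ at the required scale $\eta\ll 1/N^2$; the Fekete construction is then performed with points of $K_m\cap J'$ in place of $X\cap J'$. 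Making this density estimate effective at the Fekete scale is the technical heart of the argument.
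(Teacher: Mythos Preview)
Your opening is fine and matches the paper: pass to $\mu_n\in\Proba(X)$ with $I(\mu_n)\to V_J$, deduce $\mu_n\to\mu_J$ weakly, and upgrade this to convergence in energy via the arcsine identity $U^{\mu_J}\equiv V_J$ on $J$. The paper phrases the upgrade as uniform integrability of $-\log|x-y|$ with respect to $\mu_n\times\mu_n$, which is equivalent here.

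The divergence comes after that, and you yourself flag the gap. The Fekete--mollification scheme needs, in every $\delta$-neighborhood of each Fekete point of $J'$, a compact piece of $X$ of positive capacity, and the Choquet/strong-convergence workaround you sketch does not deliver this. Energy-norm convergence $\mu_{K_m}\to\mu_J$ controls how much \emph{mass} $K_m$ carries on intervals, not where $K_m$ sits at a prescribed scale; nothing in what you wrote rules out $K_m$ having gaps of size $\gg 1/N^2$ precisely around the Fekete nodes. Turning energy closeness into the ``quantitative density at the Fekete scale'' you need is possible in principle (e.g.\ via potential estimates for signed measures of small energy), but it is real work, and you have not done it. As written, the proof is a program, not a proof.

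The paper's route (Proposition~\ref{p:full}, from which the Corollary is immediate) is completely different and sidesteps the circularity. Instead of building a new measure from points, it \emph{reuses the measures $\mu_n$ themselves}. Approximate $\mu_{J'}$ by $f_\delta(x)\,dx$ with $f_\delta\in C(J)$ supported in~$J'$; set $\widetilde f_\delta:=f_\delta/\rho_J$, where $\rho_J$ is the arcsine density of $\mu_J$; and form $\widehat\mu_{\delta,n}:=\widetilde f_\delta\cdot\mu_n$. These measures are automatically supported on $X\cap J'$, their total mass tends to $1$ by weak convergence $\mu_n\to\mu_J$, and since $\widetilde f_\delta$ is bounded, the uniform integrability of $-\log|x-y|$ passes through the multiplier, giving $I(\widehat\mu_{\delta,n})\to I(f_\delta\,dx)$ as $n\to\infty$. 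Let $\delta\to 0$ to finish. No Fekete points, no density-at-scale estimate, no mollification: the near-equilibrium measures on $X$, reweighted by a bounded continuous factor, do all the work.
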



\subsection{Plan of the paper}
We start with introducing the re-distribution technique and prove Theorem \ref{t.subexponential} in Section~\ref{s.subexp}; we then apply the same technique to show Theorem~\ref{t:example}. We also prove Proposition~\ref{p:full} in the same section (thus ensuring that ``full capacity'' in inherited by restrictions on the subintervals).

Due to a faster decrease of the intervals, we have to modify the proof of Theorem \ref{t.subexponential}, 
adapting it to the second part of Theorem~\ref{t.Phase.transition}; it is done in Section~\ref{s.phase}. 

Though the statement of Corollary~\ref{t.alpha>2.zero.cap} 
is a particular case of Theorem~\ref{t:E-G} of Lindeberg and Erd\"os and Gillis, 
we note that it can be easily obtained as a corollary of the Cauchy-Schwartz 
inequality. Namely, with help of it one can obtain an upper bound for the capacity of 
a union of intervals; under the assumption of Theorem~\ref{t.alpha>2.zero.cap} 
this bound converges to zero as $m\rightarrow\infty$. 
Moreover, the same argument allows to get another proof of this 
theorem, that is, at the best of our knowledge, not yet known. We thus present this 
(short) proof in Section~\ref{s.zero.capacity}, thus completing the proof of 
Theorem~\ref{t.Phase.transition}.


In the proof of Theorem~\ref{t.subexponential}, there is a tempting shortcut that cannot be taken. If the capacity was continuous for a descending  family of open subsets of $[0,1]$, the arguments of the proof would be much simpler. As we found no examples in the literature demonstrating such non-continuity for open subsets of $[0,1]$, we present such an example in Section~\ref{s.counterexample}.


\section{Subexponential decay}\label{s.subexp}

In this section, we will demonstrate the technique needed to prove Theorem \ref{t.Phase.transition} in a simpler setting by proving Theorem \ref{t.subexponential}. 

Both proofs are based on the idea of \textit{re-distribution}. That is, given a measure $\mu$ that is supported on an interval or on a finite union of intervals, and given a smaller union of intervals $Y\subset X$, we can try finding a new measure $\mu'$, supported on $Y$, close to $\mu$ and with the energy $I(\mu')$ close to~$I(\mu)$. Then Theorem \ref{t.subexponential} will be proven by iterating such a re-distribution on a ``finer" and ``finer" $V_n$'s.

The natural way to do so is to ``move"  the charge, given by the measure~$\mu$, to the closest interval of $Y$, re-distributing it uniformly on each of these intervals; see Fig.~\ref{fig:1}.

\begin{figure}[!h!]
\includegraphics{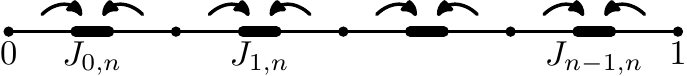}
\caption{The idea of a re-distribution}\label{fig:1}
\end{figure}

However, for ``good'' (absolutely continuous with continuous density) measures~$\mu$ and for the set $Y=V_n$ that is composed of equally spaced intervals of the same lengths, this operation can be approximated by a simpler one, the one of taking the conditional measure. As it is easier to work with, we will proceed with it.


\begin{defn}
Given a finite measure $\mu$ on set $[0,1]$ and measurable set $Y$ with positive measure, we define the \textit{re-distribution} of $\mu$ on $Y$ to be the conditional measure
$$
R(\mu|Y)=\frac{1}{\mu(Y)} \left.\mu \right|_Y.
$$
\end{defn}



\begin{figure}
\includegraphics{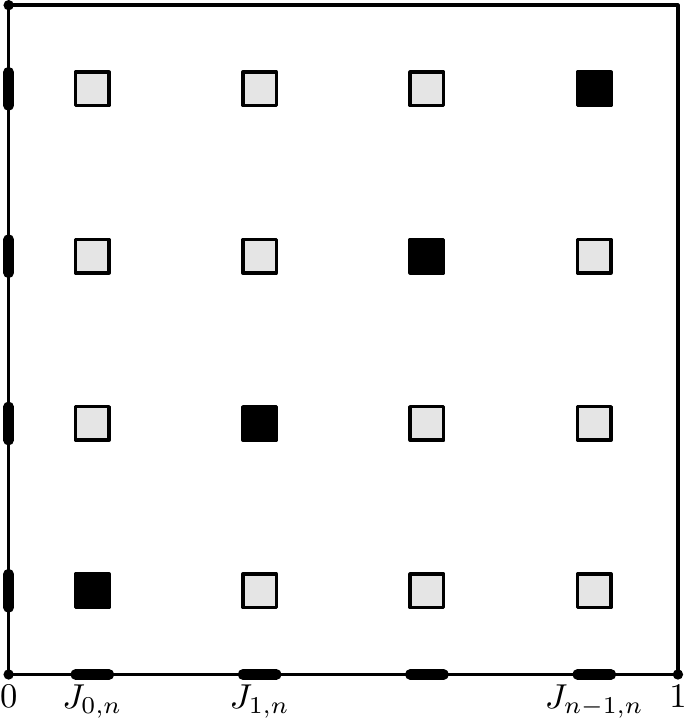}
\caption{Self-interaction (dark squares) and outer-interaction (light ones) parts of the energy integral for the re-distributed measure $R(\mu|V_n)$.}\label{fig:2}
\end{figure}

Now, let $\mu$ be an absolutely continuous measure on $[0,1]$ with continuous density. Let us see how its re-distribution on some $V_n$ changes its energy. The energy of a measure is given by a double integral~\eqref{eq:I-mu}, and the energy of the re-distribution $R(\mu|V_n)$ can be naturally decomposed into two parts: for the variables $x$ and $y$ belonging to the same interval $J_{i,n}$ and to two different ones; see Fig.~\ref{fig:2}. 

It turns out (and this is a statement of Lemma~\ref{l:outer} below) that the second part tends to the initial energy~$I(\mu)$. Meanwhile, the first (``self-interaction'') part behaves as
$$
\frac{|\log r_n|}{n}\cdot \left( \int f^2\,dx +o(1) \right);
$$
see Lemma~\ref{l:self} below. Adding this together, one will get the following proposition.

\begin{proposition}\label{p.redistribution.one.level}
Let $\mu=f(x)\,dx$, where $f\in C([0,1])$, and $\mu_n:=R(f|V_n)$. Then
\begin{equation}\label{eq:I-limit}
I(\mu_n)= I(\mu)+o(1)+\left(\int_0^1 f^2(x) +o(1)\right)\frac{|\log r_n|}{n}.
\end{equation}
\end{proposition}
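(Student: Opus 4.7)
The plan is to split the double integral defining $I(\mu_n)$ along the partition of $V_n\times V_n$ by the products $J_{i,n}\times J_{j,n}$, separating the ``self-interaction'' (diagonal $i=j$) from the ``outer interaction'' ($i\neq j$); denote these contributions $\mathrm{Self}_n$ and $\mathrm{Outer}_n$. I will work under the natural normalisation $\int_0^1 f=1$, so that $\mu_n$ is a genuine probability measure. A preliminary Riemann-sum observation, combined with the uniform continuity of $f$, gives $\mu(V_n)=r_n\sum_i f(c_{i,n})\cdot(1+o(1)) = r_n n(1+o(1))$, so the overall normalising factor $\mu(V_n)^{-2}$ behaves like $(r_n n)^{-2}(1+o(1))$.

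For $\mathrm{Self}_n$ the key ingredient is the exact one-interval computation
\[
\int_{J_{i,n}}\!\int_{J_{i,n}}-\log|x-y|\,dx\,dy \;=\; r_n^{2}\bigl(|\log r_n|+\tfrac32\bigr),
\]
obtained by rescaling to $[0,1]^2$, where $\iint -\log|u-v|\,du\,dv=\tfrac32$. Replacing $f(x)f(y)$ on $J_{i,n}^{2}$ by $f(c_{i,n})^{2}$ introduces only an $o(1)$ relative error by uniform continuity of $f$; summing in $i$ and applying the midpoint Riemann sum $\tfrac1n\sum_i f(c_{i,n})^{2}\to \int_0^1 f^{2}$ yields
\[
\mathrm{Self}_n \;=\; \frac{|\log r_n|}{n}\Bigl(\int_0^1 f^{2}+o(1)\Bigr),
\]
which is exactly the singular term in~\eqref{eq:I-limit}.

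For $\mathrm{Outer}_n$ I would replace both $-\log|x-y|$ and $f(x)f(y)$ by their values at the cell centers $(c_{i,n},c_{j,n})$. The $f$-replacement costs $o(1)$ by continuity of $f$, using that the total contribution of off-diagonal cells to $\iint -\log|x-y|\,dx\,dy$ on $V_n\times V_n$ is $O(n^{2}r_n^{2})$. The log-replacement error is controlled via $\log(|x-y|/|c_{i,n}-c_{j,n}|) = O(nr_n/|i-j|)$ for $|i-j|\ge 2$ plus a bounded per-cell error for $|i-j|=1$, which after summation gives $O(r_n\log n)+O(1/n)=o(1)$. This reduces the problem to showing that the punctured midpoint Riemann sum
\[
R_n := \frac{1}{n^{2}}\sum_{i\ne j} f(c_{i,n})f(c_{j,n})\bigl(-\log|c_{i,n}-c_{j,n}|\bigr)
\]
converges to $I(\mu)=\iint -\log|x-y|f(x)f(y)\,dx\,dy$.

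I expect this last convergence to be the main technical obstacle, since the integrand has a logarithmic singularity on the diagonal that the puncturing of the sum must absorb. I would handle it by a two-scale truncation: pick $K_n\to\infty$ with $K_n\log n/n\to 0$ (for instance $K_n=\lfloor\sqrt n\rfloor$) and split $R_n$ at $|i-j|=K_n$. The near-diagonal piece is bounded by $2\|f\|_\infty^{2}K_n\log n/n=o(1)$, matching the vanishing of $\iint_{|x-y|\le K_n/n} -\log|x-y|f(x)f(y)\,dx\,dy$ coming from absolute integrability of $\log$. On the complement $\{|x-y|>K_n/n\}$ the integrand is Lipschitz with constant $O(n/K_n)$, so the mesh error of the restricted Riemann sum is $O(1/K_n)=o(1)$, and the restricted integral tends to $I(\mu)$ as $K_n/n\to 0$. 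Adding $\mathrm{Self}_n$ and $\mathrm{Outer}_n$ then produces~\eqref{eq:I-limit}.
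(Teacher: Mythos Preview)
Your outline matches the paper's proof almost exactly: reduce to the un-normalised measure via $\mu(V_n)=nr_n(1+o(1))$, split into self- and outer-interaction, handle the diagonal by the rescaling identity together with a Riemann sum for $\int f^2$, and reduce the off-diagonal part to showing that the punctured sum $R_n$ converges to~$I(\mu)$. The paper packages the two halves as Lemmas~\ref{l:self} and~\ref{l:outer}.

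One step in your outer-interaction argument does not go through as written. On $\{|x-y|>K_n/n\}$ you assert that the integrand $f(x)f(y)(-\log|x-y|)$ is Lipschitz with constant $O(n/K_n)$; this is true for the factor $-\log|x-y|$ alone, but multiplying by $f(x)f(y)$ contributes an additional oscillation of order $\omega_f(1/n)\cdot\log(n/K_n)$, and the hypothesis $f\in C([0,1])$ does not force $\omega_f(1/n)\log n\to 0$ (take, e.g., a modulus of continuity $\omega_f(t)\sim 1/|\log t|$). Hence the claimed mesh error $O(1/K_n)$ is not available in general, and your choice $K_n=\lfloor\sqrt n\rfloor$ does not close the estimate. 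The paper sidesteps this by using a \emph{fixed} threshold $\delta$ rather than a moving one: on $\{|x-y|\ge\delta\}$ the integrand is uniformly continuous, so its Riemann sum converges with no rate required, while the near-diagonal contribution $\frac{1}{n^2}\sum_{0<|c_{i,n}-c_{j,n}|<\delta}(-\log|c_{i,n}-c_{j,n}|)$ is bounded, uniformly in~$n$, by $2\int_0^{\delta}(-\log s)\,ds$; one then sends $\delta\to 0$ at the end. Replacing your moving cutoff $K_n/n$ by this fixed-$\delta$ scheme repairs the argument with no other change.
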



We postpone its proof until the end of the section, and we will now use it to prove Theorem \ref{t.subexponential}. 
First, note that under the assumptions of this theorem we can omit the self-interaction term:

\begin{cor}\label{c.no.increase}
If $|\log r_n |=o(n),$ then $I(\mu_n)\rightarrow I(\mu)$ as $n\rightarrow \infty$.
\end{cor}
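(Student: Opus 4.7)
The plan is a one-step deduction from Proposition~\ref{p.redistribution.one.level}; the corollary is isolated precisely because the subexponential hypothesis $|\log r_n|=o(n)$ is exactly what is needed to kill the self-interaction correction term appearing in~\eqref{eq:I-limit}.

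Concretely, I would proceed as follows. Starting from the conclusion of Proposition~\ref{p.redistribution.one.level}, rewrite it as
$$
I(\mu_n)-I(\mu) = o(1) + \left(\int_0^1 f^2(x)\,dx + o(1)\right)\frac{|\log r_n|}{n},
$$
and argue that both summands on the right tend to zero as $n\to\infty$. The first $o(1)$ vanishes by definition. For the second summand, the key observation is that since $f\in C([0,1])$ and $[0,1]$ is compact, the quantity $\int_0^1 f^2(x)\,dx$ is a finite (nonnegative) constant, so the prefactor $\int_0^1 f^2(x)\,dx + o(1)$ stays bounded. The hypothesis $|\log r_n|=o(n)$ is literally the statement $\frac{|\log r_n|}{n}\to 0$, so the product vanishes.

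Combining these two observations yields $I(\mu_n)-I(\mu)\to 0$, which is the claim. There is no genuine obstacle here: the serious work (the splitting into self-interaction and outer-interaction parts, and the asymptotic estimates on each, presumably codified in the forthcoming Lemmas~\ref{l:outer} and~\ref{l:self}) has already been absorbed into Proposition~\ref{p.redistribution.one.level}. The corollary simply extracts from that asymptotic formula the regime in which the re-distribution is energy-preserving in the limit, which is the engine that will drive the iterative argument for Theorem~\ref{t.subexponential} in the remainder of Section~\ref{s.subexp}.
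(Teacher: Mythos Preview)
Your proposal is correct and matches the paper's approach exactly: the paper treats this corollary as an immediate consequence of Proposition~\ref{p.redistribution.one.level} and does not even write out a formal proof, merely remarking that under the subexponential hypothesis one ``can omit the self-interaction term.'' Your argument spells out precisely this triviality --- bounded prefactor times $|\log r_n|/n\to 0$ --- and there is nothing more to add.
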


Using it, we immediately get a first full-capacity statement.

\begin{cor}\label{c.union.full.cap}
 If $|\log r_n| = o(n)$, then we have
\begin{align*}
\Ca\left(\bigcup_{n=m}^\infty V_n\right)=\Ca([0,1]) \quad \text{for every }m\in\N.
\end{align*}
\end{cor}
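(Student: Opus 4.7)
The upper bound $\Ca\bigl(\bigcup_{n\ge m}V_n\bigr)\le \Ca([0,1])$ is immediate from monotonicity of capacity under inclusion, so the heart of the matter is the reverse inequality. My plan is to exhibit, for every $\eps>0$, a probability measure supported on a single $V_n$ with $n\ge m$ whose energy differs from the equilibrium energy $-\log\Ca([0,1])$ by at most~$O(\eps)$; by the variational definition of capacity, this will force $\Ca\bigl(\bigcup_{n\ge m}V_n\bigr)\ge e^{-O(\eps)}\Ca([0,1])$, and sending $\eps\to 0$ finishes the job.

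The first step is to produce a ``nice'' starting measure. I would exhibit a probability measure $\mu=f(x)\,dx$ with $f\in C([0,1])$ and $f>0$ such that
\[
I(\mu)\ \le\ -\log\Ca([0,1])+\eps.
\]
Strict positivity of $f$ is a convenience that ensures $\mu(V_n)>0$ for every $n$, so that the conditional measure $R(\mu|V_n)$ is well-defined. The existence of such an $f$ is a standard regularization of the equilibrium measure of $[0,1]$ (the arcsine distribution, which is singular at the endpoints and hence not directly usable in Proposition~\ref{p.redistribution.one.level}): convolve it with a narrow smooth bump (rescaling the base interval slightly so that no mass escapes $[0,1]$), and then take a small convex combination with Lebesgue measure. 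The logarithmic energy depends continuously on both regularization parameters, so it can be brought arbitrarily close to $-\log\Ca([0,1])$.

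The second step is to invoke Corollary~\ref{c.no.increase}. Since $|\log r_n|=o(n)$, the re-distributed measures $\mu_n:=R(\mu|V_n)$ satisfy $I(\mu_n)\to I(\mu)$. Hence for all sufficiently large $n$, and in particular for some $n\ge m$, one has $I(\mu_n)< -\log\Ca([0,1])+2\eps$. Because $\mu_n$ is a probability measure supported on $V_n\subset \bigcup_{k\ge m}V_k$, the variational definition of capacity yields
\[
\Ca\!\left(\bigcup_{k\ge m}V_k\right)\ \ge\ \exp(-I(\mu_n))\ \ge\ e^{-2\eps}\,\Ca([0,1]),
\]
and letting $\eps\to 0$ concludes the proof.

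The only non-routine ingredient is the smooth-density approximation of the equilibrium measure in the first step; everything else is just plugging into Corollary~\ref{c.no.increase} and applying the defining inequality $\Ca(X)\ge \exp(-I(\nu))$ for $\nu\in\Proba(X)$. This approximation is a classical fact from potential theory, but it has to be executed carefully enough to keep both the total mass equal to~$1$ and the energy within~$\eps$ of the optimum, which is why I treat it as the main (though standard) obstacle here.
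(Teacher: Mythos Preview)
Your proposal is correct and follows essentially the same approach as the paper: approximate the equilibrium measure $\muo$ of $[0,1]$ by a probability measure with continuous density, then apply Corollary~\ref{c.no.increase} to find some $n\ge m$ with $I(\mu_n)$ within~$\eps$ of $I(\muo)$. The only difference is cosmetic: the paper regularizes $\muo$ via an explicit linear cut-off of the arcsine density near the endpoints (followed by renormalization), whereas you propose mollification plus a convex combination with Lebesgue measure; both produce the required continuous-density approximant and the rest of the argument is identical.
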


\begin{proof}
Consider the measure $\muo=f_{[0,1]}(x)dx$, where
\[
f_{[0,1]}(x)=\frac{1}{\pi \sqrt{x(1-x)}}.
\]
It is known that this measure minimizes the energy for probability measures supported on $[0,1]$:
\[
I(\muo)=\inf\{ I(\mu) \mid \mu \in\Proba([0,1]) \}, 
\]
and hence that $\Ca ([0,1])= e^{-I(\muo)}$.

Formally, we cannot apply Corollary~\ref{c.no.increase} to this measure, as its density function is not continuous at the endpoints of~$[0,1]$. To avoid this problem, note that there exists a family of probability measures $\mu^{\delta}=f_{\delta}(x)\, dx$ on~$[0,1]$ with $f_{\delta}\in C([0,1])$, such that $I(\mu^{\delta})\to I(\muo)$ as $\delta\to 0$. 

Indeed, consider a family of cut-off densities
$$
\hf_{\delta}(x)=\begin{cases}
\frac{x}{\delta} \cdot f_{[0,1]}(\delta), & x\in [0,\delta),\\
f_{[0,1]}(x), & x\in [\delta,1-\delta],\\
\frac{1-x}{\delta} \cdot f_{[0,1]}(1-\delta), & x\in (1-\delta,1],
\end{cases}
$$
the corresponding (non-probability) measures $\wmu_{\delta}:=\hf_{\delta}(x) \, dx$ on~$[0,1]$, and let 
\[ 
Z_{\delta}:=\wmu_{\delta}([0,1])=\int_0^1 f_{\delta}(x) dx
\] 
be the corresponding normalization constants. Then (for instance, by dominated convergence theorem) we have 
\[
I(\wmu_{\delta}) \to I(\muo),  \quad Z_{\delta} \to 1
\]
as $\delta\to 0$ (here we apply definition~\eqref{eq:I-mu} to non-probability measures~$\wmu_{\delta}$). Hence, for the family 
of probability measures $\mu^{\delta}:=\frac{1}{Z_{\delta}} \wmu_{\delta}$ we also have 
$$
I(\mu^{\delta})= \frac{1}{Z^2_\delta} I(\wmu_{{\delta}})\to I(\muo), \quad \delta\to 0.
$$

Now, let $m\in\N$ be fixed. 
For any $\eps>0$ the above arguments imply that there exists $\delta>0$ such that $I(\mu^{\delta})<I(\muo)+\eps/2$. 
Fix such $\delta>0$ and consider the family of re-distributed measures $\mu^{\delta}_n:=R(\mu^{\delta}|V_n)$.
As the measure~$\mu^{\delta}$ has a continuous density, due to Corollary~\ref{c.no.increase} we have 
$$
I(\mu^{\delta}_n) \to I(\mu^{\delta}), \quad n\to \infty.
$$
In particular, there exists $n\ge m$ such that 
\[
I(\mu^{\delta}_n) \le I(\mu^{\delta}) + \eps/2 \le I(\muo)+\eps.
\] 
As $\eps>0$ was arbitrary, we thus get that 
$$
\inf\{ I(\mu) \mid \mu \in \Proba(\bigcup_{n=m}^{\infty} V_n) \} \le I(\muo),
$$
and hence the desired 
$$
\Ca\left(\bigcup_{n=m}^\infty V_n\right)=\Ca([0,1]) \quad \text{for every }m\in\N.
$$

\end{proof}


It is known that capacity is continuous with respect to any increasing sequence of Borel sets of $\C$ and decreasing sequence of \emph{compact} subsets of~$\C$. Our sequence of sets 
$\left(\bigcup_{n=m}^\infty V_n\right)_{m\in \N}$ is decreasing, but is not closed. 

This is where it would be tempting to conclude by continuity. If the capacity \emph{was} continuous for a decreasing family of open subsets of $[0,1]$, Corollary \ref{c.union.full.cap} would immediately imply Theorem \ref{t.subexponential}.

For decreasing families of (open) subsets of $\C$, it is known that such continuity does not take place; however, all the examples that we found in the literature were essentially two-dimensional. This naturally motivates a question of whether it holds for the subsets of a bounded interval. However, it turns out that it is not the case; we construct a counter-example in Section~\ref{s.counterexample}.

Thus, we continue the proof of Theorem \ref{t.alpha>2.zero.cap} by iterating the re-distributions procedure.
Namely, we have the following

\begin{lemma}\label{l.iter}
Let $|\log r_n|=o(n)$, and $U\subset [0,1]$ be a finite union of intervals, and a measure $\nu=f(x) \, dx$ be a measure with a piecewise-continuous density, supported in $U$. Then for any $\eps>0$ and any $m$ there exist $n\ge m$ and a measure $\nu'$ with a piecewise-continuous density, such that
\begin{align*}
I(\nu')<I(\nu)+\eps,
\end{align*}
and the support of $\nu'$ is contained in $U\cap V_n$.
\end{lemma}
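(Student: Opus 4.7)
The plan is to reduce the lemma to Corollary~\ref{c.no.increase} by first approximating $\nu$ by a probability measure $\tilde{\nu}$ with continuous density supported inside $U$, and then applying the one-level re-distribution $R(\,\cdot\,|\,V_n)$ for some sufficiently large $n \ge m$.

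For the approximation step, I would exploit that the piecewise-continuous density $f$ has only finitely many jump discontinuities inside $U$ and that $\partial U$ is finite. Given $\delta>0$, I would define $\tilde{f}_\delta$ by: (i) on a $\delta$-neighborhood of each interior jump point, replace $f$ by the linear interpolant between its one-sided limits; (ii) on the $\delta$-neighborhood from inside $U$ of each endpoint of $U$, replace $f$ by a linear function decreasing to $0$ at the boundary; (iii) leave $f$ unchanged elsewhere. After renormalizing to unit mass, the resulting measure $\tilde{\nu}_\delta := \tilde{f}_\delta(x)\,dx$ has continuous density on $[0,1]$, is supported in $U$, and satisfies $\tilde{f}_\delta \to f$ pointwise a.e.\ with a uniform $L^\infty$ bound (controlled by $\|f\|_\infty$). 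Since the kernel $-\log|x-y|$ is locally integrable on $[0,1]^2$, dominated convergence gives $I(\tilde{\nu}_\delta) \to I(\nu)$ as $\delta \to 0^+$. Fix $\delta$ so that $I(\tilde{\nu}_\delta) < I(\nu) + \eps/2$, and denote this measure by $\tilde{\nu}$.

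Next, I would apply Corollary~\ref{c.no.increase} to $\tilde{\nu}$: since $\tilde{\nu}$ has continuous density on $[0,1]$ and $|\log r_n| = o(n)$, we have $I(R(\tilde{\nu}\,|\,V_n)) \to I(\tilde{\nu})$ as $n \to \infty$. Thus there exists some $n \ge m$ with $I(R(\tilde{\nu}\,|\,V_n)) < I(\tilde{\nu}) + \eps/2 < I(\nu) + \eps$. Set $\nu' := R(\tilde{\nu}\,|\,V_n)$. Its density is proportional to $\tilde{f}_\delta\cdot\indicator_{V_n}$, which is piecewise continuous (it is continuous on each interval $J_{i,n}\subset V_n$ and jumps only at the endpoints of these intervals), and its support is contained in $\supp(\tilde{\nu}) \cap V_n \subset U \cap V_n$, as required.

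The main obstacle is the approximation in Step~1: one must verify that the smoothing of jumps and the cutoff near $\partial U$ do not increase the logarithmic energy by more than $\eps/2$. The key points are the uniform $L^\infty$ control of the $\tilde{f}_\delta$ and the local integrability of the kernel $-\log|x-y|$, so that the difference $|I(\tilde{\nu}_\delta) - I(\nu)|$ is dominated by an integral of $|\log|x-y||$ over a set of Lebesgue measure $O(\delta)$, which vanishes as $\delta\to 0$. Once this approximation is in place, the already-established one-level re-distribution machinery delivers the conclusion directly.
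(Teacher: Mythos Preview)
Your proposal is correct and follows essentially the same two-step strategy as the paper: first approximate $\nu$ by a probability measure with continuous density supported in $U$ (using linear modifications on $\delta$-neighborhoods of the finitely many discontinuity points, followed by renormalization and a dominated-convergence argument for the energies), then apply the one-level re-distribution result (Corollary~\ref{c.no.increase}, equivalently Proposition~\ref{p.redistribution.one.level} combined with $|\log r_n|=o(n)$) to find $n\ge m$ with the required energy bound. The only cosmetic difference is that the paper sends the approximating density to $0$ at \emph{every} discontinuity point (including interior jumps), whereas you interpolate linearly across interior jumps and cut off only at $\partial U$; both constructions yield a continuous density supported in $U$ with the needed $L^\infty$ control, so this distinction is immaterial.
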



Note that Lemma~\ref{l.iter} suffices to prove Theorem~\ref{t.subexponential}: 
\begin{proof}[Proof of Theorem~\ref{t.subexponential}]
Fix an arbitrary $\eps>0$. We are going to construct a Borel probability measures $\nu_n$, satisfying $I(\nu_n)<I(\muo)+\eps$ and concentrating on the set~$S$. Start (as in the proof of Corollary~\ref{c.no.increase}) with a measure $\nu_0$ with a continuous density on $[0,1]$, satisfying 
$I(\nu_0)<I(\muo)+\frac{\eps}{2}$. 

Recursively applying Lemma~\ref{l.iter}, we construct a sequence $\nu_k$ of measures with a piecewise continuous density, and an increasing sequence of numbers $n_k$, such that the measure $\nu_k$ is supported on $V_{n_1}\cap\dots\cap V_{n_k}$ and that $I(\nu_k)<I(\nu_{k-1}) + \frac{\eps}{2^{k+1}}$.

Then, we have 
$$
I(\nu_k)<I(\muo)+\frac{\eps}{2}+\sum_{j=1}^{k} \frac{\eps}{2^{j+1}} <  I(\muo)+\eps.   
$$
Now, denote $C_k:=\overline{V}_{n_1}\cap \dots \cap \overline{V}_{n_k}$; note that this set differs from the intersection of the corresponding open 
sets~$V_{n_j}$ by at most a finite number of endpoints. 

The family $C_k$ is a decreasing family of compact sets, on which measures $\nu_k$ are respectively supported. 
Hence, any weak limit point $\nu_{\infty}$ of the sequence $\nu_k$ is supported on $C_{\infty}:=\bigcap_k C_k$. 

Recall that passing to the weak limit does not increase the energy (see, e.g., \cite[Lemma~3.3.3]{Ra}). Indeed, for a 
$*$-convergent sequence $\mu_j\to \mu$ of measures on $[0,1]$ one has 
\begin{equation}\label{eq:F-C}
I(\mu) =\lim_{C\to\infty} \int F_C(x,y) \, d\mu(x) \, d\mu(y),
\end{equation}
where $F_C(x,y)=\min (-\log |x-y|,C)$. Thus for any $z<I(\mu)$ there exists~$C$ such that the integral in the 
right hand side of~\eqref{eq:F-C} is at least~$z$. For such~$C$,
\begin{multline*}
\liminf_{j\to\infty} I(\mu_j)\ge \liminf_{j\to\infty} \int F_C(x,y) \, d\mu_j(x) \, d\mu_j(y)  =\\
 = \int F_C(x,y) \, d\mu(x) \, d\mu(y) \ge z,
\end{multline*}
and as $z<I(\mu)$ was arbitrary, we get the desired
$$
\liminf_{j\to\infty} I(\mu_j)\ge I(\mu).
$$
In fact, that is exactly the argument that is used to show the capacity is continuous on decreasing families of compact subsets.

Applying the above argument to our convergent subsequence $\mu_j:=\nu_{k_j}\to \nu_{\infty}$, we get 
$$
I(\nu_{\infty}) \le \lim_j I(\nu_{k_j}) < I(\muo)+\eps.
$$
%
%

On the other hand, $\nu_{\infty}$ is supported on $C_{\infty}\subset S\cup D$, where $D:=\bigcup_k (\partial V_k)$ is a countable set of endpoints. As $I(\nu_{\infty})$ is finite, this measure does not have any atoms hence $\nu_\infty (D)=0$, and thus the measure $\nu_\infty$ is in fact supported on~$S$. Hence, for an arbitrary $\eps>0$ there exists a measure $\nu_{\infty}$, supported on $S$, such that
$$
I(\nu_{\infty})<I(\muo)+\eps,
$$
and thus $\Ca(S)=\Ca([0,1])$.
\end{proof}

Also, still before proving Lemma~\ref{l.iter}, note that the same construction allows to establish Theorem~\ref{t:example}. 
\begin{proof}[Proof of Theorem~\ref{t:example}]
Indeed, assume that the relation $\frac{1}{|\log r|}=O(h(r))$ as $r\to 0+$ does not hold. Then there exists a sequence $r_j\to 0$ along which 
$$
h(r_j) =o(\frac{1}{|\log r_j|}) \, \text{ as } \, j\to \infty.
$$
Extracting a subsequence if necessary, we can assume that 
$$
h(r_j)\cdot |\log r_j|<4^{-j-1}.
$$ 
Choose now integer numbers $n_j=\left[\sqrt{\frac{|\log r_j|}{h(r_j)}} \right]$, roughly speaking, inserting~$n_j$ multiplicatively in the middle between $|\log r_j|$ and $\frac{1}{h(r_j)}$. Then (for all sufficiently large~$j$) we have
\begin{equation}\label{eq:n-j}
n_j h(r_j) < 2^{-j}, \quad \frac{n_j}{|\log r_j|}>2^{j}.
\end{equation}
Consider now the $G_{\delta}$-set
$$
\widetilde{S} = \bigcap_{m=1}^{\infty} \bigcup_{j=m}^{\infty} V_{n_j},
$$
where $V_{n_j}$ are still defined by~\eqref{eq:V-def}--\eqref{d.uniform}; in other words, we are now using only the denominators $n_j$ with the corresponding radii~$r_j$. The first of inequalities in~\eqref{eq:n-j} then implies that this set is of zero $h$-volume, as the series 
$\sum_j n_j h(r_j)$ converges. 
On the other, the second inequality in~\eqref{eq:n-j} ensures that $|\log r_j|= o(n_j)$. Hence the same technique as in the proof of Theorem~\ref{t.subexponential} is applicable, 
showing that the set $\widetilde{S}$ is actually of full capacity on~$[0,1]$.
\end{proof}





\begin{proof}[Proof of Lemma~\ref{l.iter}]
As in the proof of Corollary~\ref{c.union.full.cap}, there exists a family $\nu^{\delta}=f_{\delta}(x) \,dx$ of probability measures, supported on~$U$, 
such that $f_{\delta}\in C([0,1])$ and such that $I(\nu^{\delta})\to I(\nu)$ as $\delta\to 0$. Indeed, if intervals $(a_i,b_i)\subset U$ are the intervals of continuity of the density~$f(x)$, we consider a new (non-probability) density 
$$
\hf_{\delta}(x)=\begin{cases}
\frac{x-a_i}{\delta} \cdot f_{[0,1]}(a_i+\delta), & x\in [a_i,a_i+\delta),\\
f(x), & x\in [a_i+\delta,b_i-\delta],\\
\frac{b_i-x}{\delta} \cdot f_{[0,1]}(b_i-\delta), & x\in (b_i-\delta,b_i];
\end{cases}
$$
see Fig.~\ref{f:joining}. Then, define 
$$
\wnu^{\delta} = \hf_{\delta}(x)\, dx, \quad Z_{\delta}=\wnu^{\delta}([0,1]), \quad \nu^{\delta}=\frac{1}{Z_{\delta}} \wnu^{\delta}.
$$
\begin{figure}[!h!]
\includegraphics{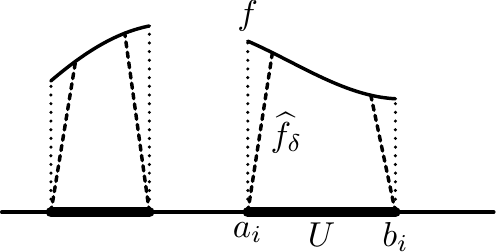}
\caption{Transforming the density~$f(x)$ into a continuous one.}\label{f:joining}
\end{figure}

As before, we get 
$$
Z_{\delta}\to 1, \quad I(\wnu^{\delta})\to I(\nu) \quad \text{as} \quad \delta\rightarrow 0,
$$
and hence $I(\nu^{\delta})= \frac{1}{Z_{\delta}^2} I(\wnu^{\delta}) \to I(\nu)$.

Now, if $\eps>0$ is given, take such a measure $\nu^{\delta}$ that \mbox{$I(\nu^{\delta})<I(\nu)+\frac{\eps}{2}$}. Applying Proposition~\ref{p.redistribution.one.level} to the re-distributions $\nu^{\delta}_n:=R(\nu^{\delta}|V_n)$ of this measure, we get that 
$I(\nu^{\delta}_n)=I(\nu^{\delta})+o(1)$. Hence, for some $n\ge m$ we have 
$$
I(\nu^{\delta}_n) < I(\nu^{\delta})+\frac{\eps}{2} < I(\nu)+\eps;
$$
by construction, the measure $\nu^{\delta}_n$ is supported on $V_n\cap U$.
\end{proof}


\begin{proof}[Proof of Proposition~\ref{p.redistribution.one.level}]
We conclude the section with the proof of Proposition \ref{p.redistribution.one.level}. First, note that the normalization constant $\mu(V_n)$ satisfies
$$
\mu(V_n)=nr_n \cdot (1+o(1)).
$$
Indeed, for any $\eps>0$ due to the uniform continuity of $f(x)$ for all sufficiently large $n$ we have $|f(x)-f(c_{i,n})| <\eps$ for all $x\in J_{i,n}$. Hence,
$$
\left| \int_{J_{i,n}} f(x) \, dx - f(c_{i,n}) r_n \right| <\eps r_n;
$$
summing over $i=0,\dots,n-1$ and dividing by $nr_n$, we get 
$$
\left|\frac{1}{n r_n} \mu(V_n) - \frac{1}{n} \sum_{i=0}^{n-1} f(c_{i,n}) \right| < \eps.
$$
Now, $\frac{1}{n} \sum_{i=0}^{n-1} f(c_{i,n}) \to \int_{[0,1]} f(x) \, dx=1$; as $\eps>0$ was arbitrary, we thus get the desired 
$$
\frac{1}{n r_n} \mu(V_n) = 1 +o(1).
$$

Now, multiplying~\eqref{eq:I-limit} by $(1+o(1))$ does not change its right hand side, so we can consider (non-probability) measure $\frac{1}{n r_n} \mu|_{V_n}$ instead of~$R(\mu|V_n)=\frac{1}{\mu(V_n)}\mu|_{V_n}$. It is also useful to consider extend the definition of the energy, considering it as a bilinear form: for any two (not necessarily probability) measures $\mu, \nu$ let
\begin{align*}
I(\nu,\mu)=-\iint \log|x-y|\, d\nu(x)\,d\mu(y).
\end{align*}
It is immediate to note that
\begin{enumerate}
\item $I(\nu)=I(\nu,\nu)$,
\item $I(\nu,\mu)=I(\mu,\nu),$
\item $I(\nu,\mu)>0$, if $\mu$ and $\nu$ are supported on~$[0,1]$,
\item $I(\nu,\mu+\mu')=I(\nu,\mu)+I(\nu,\mu')$; $I(\nu, c\mu)=c I(\nu,\mu)$.
\end{enumerate}

The measure $\frac{1}{n r_n} \mu|_{V_n}$ can be written as 
$$
\frac{1}{n r_n} \mu|_{V_n} = \frac{1}{n} \sum_{i=0}^{n-1} \mu_{i,n}, 
$$
where 
$\mu_{i,n}:=\frac{1}{r_n} \mu|_{J_{i,n}}$. Thus, we can decompose $I(\frac{1}{n r_n} \mu|_{V_n})$ as 
\begin{multline*}
I(\frac{1}{n r_n} \mu|_{V_n}) = \frac{1}{n^2} \sum_{i,j=0}^{n-1} I(\mu_{i,n}, \mu_{j,n}) = \\ 
= \frac{1}{n^2} \sum_i I(\mu_{i,n}) + \frac{1}{n^2} \sum_{i\neq j} I(\mu_{i,n},\mu_{j,n}).
\end{multline*}

Proposition~\ref{p.redistribution.one.level} now follows from the next two Lemmas,~\ref{l:self} 
and~\ref{l:outer}, estimating the diagonal and off-diagonal sums respectively.
\end{proof}



\begin{lemma}\label{l:self}
\begin{equation}\label{eq:self}
\frac{1}{n^2}\sum_{i=0}^{n-1} I (\mu_{i,n}) = \frac{|\log r_n|}{n} \left( \int_0^1 f^2(x) dx + o(1) \right).
\end{equation}
\end{lemma}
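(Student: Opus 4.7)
The plan is to reduce $I(\mu_{i,n})$ to a nearly explicit expression via scaling, and then identify the sum as a Riemann sum.

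First, I would exploit uniform continuity of $f$ on $[0,1]$. Writing $\omega(\delta)=\sup\{|f(x)-f(y)|:|x-y|\le\delta\}$, on each interval $J_{i,n}$ one has $|f(x)f(y)-f(c_{i,n})^2|\le \eta_n$, where $\eta_n:=2\|f\|_\infty\omega(r_n)+\omega(r_n)^2\to 0$. Next, by the change of variables $x=c_{i,n}+r_n u$, $y=c_{i,n}+r_n v$, I would compute
\begin{equation*}
\iint_{J_{i,n}\times J_{i,n}}(-\log|x-y|)\,dx\,dy=r_n^2|\log r_n|+r_n^2 K_0,
\end{equation*}
where $K_0:=\iint_{[0,1]^2}(-\log|u-v|)\,du\,dv=\tfrac{3}{2}$ is a finite explicit constant.

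Combining these two ingredients and recalling that $\mu_{i,n}=\frac{1}{r_n}\mu|_{J_{i,n}}$, I obtain
\begin{equation*}
I(\mu_{i,n})=\frac{1}{r_n^2}\iint_{J_{i,n}^2}(-\log|x-y|)f(x)f(y)\,dx\,dy=f(c_{i,n})^2\bigl(|\log r_n|+K_0\bigr)+\theta_{i,n},
\end{equation*}
with the uniform bound $|\theta_{i,n}|\le \eta_n(|\log r_n|+K_0)$. Summing over $i$ and dividing by $n^2$ gives
\begin{equation*}
\frac{1}{n^2}\sum_{i=0}^{n-1}I(\mu_{i,n})=\frac{|\log r_n|+K_0}{n}\left(\frac{1}{n}\sum_{i=0}^{n-1}f(c_{i,n})^2+O(\eta_n)\right).
\end{equation*}
The Riemann sum $\frac{1}{n}\sum_{i}f(c_{i,n})^2$ converges to $\int_0^1 f^2(x)\,dx$ by continuity of $f^2$.

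To finish, I would use the standing assumption $r_n<1/n$, which gives $|\log r_n|>\log n\to\infty$, so that $\frac{|\log r_n|+K_0}{n}=\frac{|\log r_n|}{n}(1+o(1))$ and the error $O(\eta_n)$ gets absorbed into $o(1)$. This yields exactly~\eqref{eq:self}. The main (and really only) point requiring attention is the bookkeeping of the two independent error sources $K_0$ and $\eta_n$; both are harmless precisely because $|\log r_n|\to\infty$, so no finer estimate on the decay of $r_n$ than pairwise disjointness of the $J_{i,n}$ is needed in this lemma.
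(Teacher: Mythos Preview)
Your proof is correct and follows essentially the same approach as the paper's: both reduce $I(\mu_{i,n})$ to the energy of the uniform measure on $J_{i,n}$ via the scaling identity $I(\frac{1}{r_n}dx|_{J_{i,n}})=|\log r_n|+K_0$, control the deviation using uniform continuity of $f$, and then identify a Riemann sum for $\int f^2$. The only cosmetic difference is that the paper squeezes $I(\mu_{i,n})$ between $(\min_{J_{i,n}}f)^2$ and $(\max_{J_{i,n}}f)^2$ times the uniform-measure energy, whereas you use the additive bound $|f(x)f(y)-f(c_{i,n})^2|\le\eta_n$; your version is slightly more explicit in tracking $K_0$ and in invoking the standing assumption $r_n<1/n$ to get $|\log r_n|\to\infty$.
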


\begin{lemma}\label{l:outer}
\begin{equation}\label{eq:outer}
\frac{1}{n^2}\sum_{i\neq j} I (\mu_{i,n},\mu_{j,n}) = I(\mu) + o(1).
\end{equation}
\end{lemma}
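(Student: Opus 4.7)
The strategy is to view $\tfrac{1}{n^{2}}\sum_{i\neq j}I(\mu_{i,n},\mu_{j,n})$ as a two-variable Riemann sum approximating $I(\mu)=\iint(-\log|x-y|)f(x)f(y)\,dx\,dy$, handling the logarithmic singularity along the diagonal by a standard cutoff.

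First I would exploit the uniform continuity of $f$ in order to freeze $f(x)f(y)$ to the value $f(c_{i,n})f(c_{j,n})$ inside each double integral defining $I(\mu_{i,n},\mu_{j,n})$. The freezing cost per pair is of order $\omega(r_{n})\cdot A_{i,j,n}$, where $\omega$ is the modulus of continuity of $f$ and
\[
A_{i,j,n}:=\frac{1}{r_{n}^{2}}\iint_{J_{i,n}\times J_{j,n}}(-\log|x-y|)\,dx\,dy
\]
is the box average of the logarithmic kernel. A linear change of variables on the box yields the clean identity
\[
A_{i,j,n}=-\log|c_{i,n}-c_{j,n}|+\varepsilon_{i,j,n},\qquad \varepsilon_{i,j,n}=-\iint_{[-1/2,1/2]^{2}}\log|1-\eta(s-t)|\,ds\,dt,
\]
with dimensionless ratio $\eta=r_{n}/|c_{i,n}-c_{j,n}|\in(0,1)$. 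The key analytic fact is that $|\varepsilon_{i,j,n}|$ is bounded by a \emph{universal} constant over $\eta\in[0,1]$: reducing to one variable, $|\varepsilon_{i,j,n}|\le\int_{-1}^{1}|\log|1-\eta u||(1-|u|)\,du$, an integral with an integrable logarithmic singularity even at $\eta=1$. Moreover, a Taylor expansion gives $\varepsilon_{i,j,n}=\eta^{2}/12+O(\eta^{3})$ for small $\eta$. Combining this with the elementary estimate $\tfrac{1}{n^{2}}\sum_{i\neq j}A_{i,j,n}=O(1)$ (obtained from $A_{i,j,n}\le\log(n/|i-j|)+C$), the freezing error contributes only $O(\omega(r_{n}))=o(1)$ to the total sum.

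Next, for a small fixed $\delta>0$, I would split the remaining Riemann-type sum at $|i-j|=\delta n$. The near-diagonal part satisfies
\[
\frac{1}{n^{2}}\!\!\sum_{1\le|i-j|\le\delta n}\!\!|I(\mu_{i,n},\mu_{j,n})|=O\bigl(\|f\|_{\infty}^{2}\,\delta\log(1/\delta)\bigr),
\]
which mirrors the bound $\iint_{|x-y|\le\delta}(-\log|x-y|)f(x)f(y)\,dx\,dy=O(\delta\log(1/\delta))$ on the slab we are cutting away from $I(\mu)$. On the far side $|i-j|>\delta n$ one has $\eta\le r_{n}/\delta\to 0$ uniformly, so the corrections $\varepsilon_{i,j,n}$ contribute $O((r_{n}/\delta)^{2})\to 0$, and we are left with the classical Riemann sum
\[
\frac{1}{n^{2}}\!\!\sum_{|i-j|>\delta n}\!\!(-\log|c_{i,n}-c_{j,n}|)\,f(c_{i,n})f(c_{j,n})\longrightarrow\iint_{|x-y|>\delta}(-\log|x-y|)f(x)f(y)\,dx\,dy,
\]
since the integrand is continuous on the closed region $\{|x-y|\ge\delta\}$. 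Sending $n\to\infty$ first and then $\delta\to 0$ completes the proof.

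The main obstacle is the near-diagonal regime: for $|i-j|=1$ the individual correction $\varepsilon_{i,j,n}$ need not be small when $r_{n}n$ approaches $1$, and the box averages $A_{i,j,n}$ themselves grow like $\log n$. What saves the argument is precisely the $L^{1}$-integrability of the logarithmic kernel, both in its continuous form and in the averaged expression $\int|\log|1-\eta u||(1-|u|)\,du$; this forces the aggregate near-diagonal contribution to be of order $\delta\log(1/\delta)$, and hence harmless upon letting $\delta\to 0$.
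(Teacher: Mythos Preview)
Your proposal is correct and follows essentially the same strategy as the paper: split the off-diagonal sum according to whether $|c_{i,n}-c_{j,n}|<\delta$ or $\ge\delta$, control the near-diagonal piece via the $L^{1}$-integrability of $-\log$ near~$0$ (yielding a bound of order $\delta\log(1/\delta)$), and treat the far piece as a Riemann sum for $\iint_{|x-y|\ge\delta}(-\log|x-y|)f(x)f(y)\,dx\,dy$, finally letting $\delta\to 0$. The only packaging difference is that you freeze $f$ at the centers first and compute the box-average correction $\varepsilon_{i,j,n}$ explicitly, whereas the paper invokes its preceding interval-interaction lemma to bound $I(\tfrac{1}{r_n}dx|_{J_{i,n}},\tfrac{1}{r_n}dx|_{J_{j,n}})$ by $-\log|c_{i,n}-c_{j,n}|+2$ and then appeals directly to the uniform continuity of $f(x)f(y)(-\log|x-y|)$ on $\{|x-y|\ge\delta\}$; your explicit formula $\varepsilon_{i,j,n}=\eta^{2}/12+O(\eta^{3})$ is sharper but not needed here.
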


\begin{proof}[Proof of Lemma~\ref{l:self}]
Let us first estimate $I(\mu_{i,n})$ for an individual $i$, comparing it with the energy of the uniform measure $\frac{1}{r_n} dx|_{J_{i,n}}$. Indeed, 
$$
I(\mu_{i,n}) = \iint_{J_{i,n}} (-\log|x-y|) f(x) f(y) \, \frac{dx}{r_n} \frac{dy}{r_n},
$$
and hence
\begin{align}\label{e.outer1}
(\min_{J_{i,n}} f(x))^2 \cdot I(\frac{1}{r_n} dx|_{J_{i,n}}) \le I(\mu_{i,n}) \le  (\max_{J_{i,n}} f(x))^2 \cdot I(\frac{1}{r_n} dx|_{J_{i,n}}).
\end{align}
Rescaling and a change of variables immediately shows that 
\begin{align}\label{e.outer2}
I(\frac{1}{r_n} dx|_{J_{i,n}}) = \log r_n + I(\left. dx\right|_{[0,1]}) = \log r_n \cdot (1+o(1)).
\end{align}
Fix an arbitrarily small $\eps>0$; for all sufficiently large $n$, the function $f(x)^2$ then oscillates less than $\eps/2$ on any of the intervals $J_{i,n}$. Joining it with \eqref{e.outer1} and \eqref{e.outer2}, for all sufficiently large $n$ we get
$$
\frac{1}{|\log r_n|} I(\mu_{i,n}) \in (f^2(c_{i,n})-\eps,f^2(c_{i,n})+\eps).
$$
Summing over $i$ and dividing by $n$, we get 
$$
\left| \frac{1}{n |\log r_n|} \sum_i I(\mu_{i,n}) - \frac{1}{n} \sum_i f^2(c_{i,n}) \right| <\eps.
$$
The second sum converges to the Riemann integral $\int_0^1 f^2(x) dx$; as $\eps>0$ was arbitrary, we get 
$$
\frac{1}{n |\log r_n|} \sum_i I(\mu_{i,n}) = \int_0^1 f^2(x) dx + o(1).
$$
Multiplying by $\frac{|\log r_n|}{n}$, we get the desired~\eqref{eq:self}.
\end{proof}

Before proceeding with Lemma~\ref{l:outer}, let us estimate the interaction energy for uniformly distributed measures 
on the subintervals, comparing it to the interaction energy between point charges at their centers.

\begin{figure}[!h!]
\includegraphics{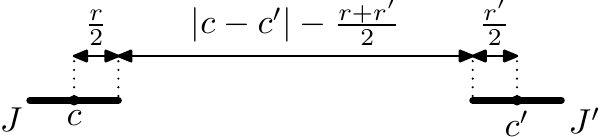}
\caption{Two intervals $J,J'$ and their centers.}\label{fig:intervals}
\end{figure}

\begin{lemma}
Let $J,J'\subset [0,1]$ be two disjoint intervals with centers $c,c'$ and with lengths $r,r'$ respectively (see Fig.~\ref{fig:intervals}). Then the interaction energy between the uniform measures on these intervals satisfies 
$$
-\log |c-c'| < I(\frac{1}{r} dx|_J, \frac{1}{r'} dx|_{J'}) < (-\log |c-c'|) + \Delta, 
$$
where $\Delta=\min (2, (-\log (1-\frac{r+r'}{2|c-c'|})))$.
\end{lemma}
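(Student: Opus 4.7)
My plan is to reformulate the interaction energy probabilistically: if $X$ and $Y$ are independent uniforms on $J$ and $J'$, then
\begin{equation*}
I\bigl(\tfrac{1}{r}\,dx|_J,\ \tfrac{1}{r'}\,dx|_{J'}\bigr) = \mathbb{E}\bigl[-\log|X-Y|\bigr].
\end{equation*}
Set $s:=|c-c'|$ and $L:=(r+r')/2$, so that disjointness of $J,J'$ gives $s\ge L$. Assume without loss of generality $c<c'$; then $|X-Y|=Y-X$, and the centered variable $W:=(Y-X)-s=(Y-c')-(X-c)$ is supported on $[-L,L]$ with symmetric density $\rho_W$ (the convolution of two centered uniform densities), satisfying $\|\rho_W\|_\infty\le 1/\max(r,r')$. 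This gives
\begin{equation*}
I\bigl(\tfrac{1}{r}\,dx|_J,\ \tfrac{1}{r'}\,dx|_{J'}\bigr)=-\log s+A,\qquad A:=\mathbb{E}[-\log(1+W/s)],
\end{equation*}
so it suffices to show $0<A<\Delta$.

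The lower bound is immediate from strict Jensen's inequality applied to the strictly convex function $-\log$: since $|X-Y|$ is non-constant and $\mathbb{E}|X-Y|=s$, one has $\mathbb{E}[-\log|X-Y|]>-\log s$, i.e.\ $A>0$.

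For the upper bound I would combine two independent estimates on $A$. The first is pointwise: $W\ge -L$ yields $-\log(1+W/s)\le -\log(1-L/s)$, hence $A\le -\log(1-L/s)$. The second uses the symmetry of $\rho_W$ and the identity $-\log(1+x)-\log(1-x)=-\log(1-x^2)$ to write
\begin{equation*}
A=\tfrac{1}{2}\,\mathbb{E}\bigl[-\log(1-W^2/s^2)\bigr].
\end{equation*}
Using $s\ge L$ to replace $s^2$ by $L^2$ in the denominator (enlarging the integrand) and then substituting $t=W/L$ (so that the density of $W/L$ on $[-1,1]$ is at most $L/\max(r,r')\le 1$), one obtains
\begin{equation*}
A\le \tfrac{1}{2}\int_{-1}^{1}-\log(1-t^2)\,dt=2-2\log 2<2.
\end{equation*}
Taking the minimum of the two estimates gives $A<\min\bigl(-\log(1-L/s),\,2\bigr)=\Delta$.

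The main subtlety lies in the symmetrization step: the pointwise bound alone explodes as $L/s\to 1$, so the symmetry of $\rho_W$ is what ensures the universal finite constant $2$ in $\Delta$ even when the intervals almost touch. This constant is extracted from the rescaled density bound $L\|\rho_W\|_\infty\le 1$, which is the standard fact that a convolution of two uniform densities has $L^\infty$-norm at most $1/\max(r,r')$. Strict inequality throughout is automatic, either from strict Jensen (lower bound) or from the fact that the pointwise upper estimates are strict on sets of positive Lebesgue measure.
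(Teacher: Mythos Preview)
Your proof is correct. The lower bound (strict Jensen) and the first upper bound (the pointwise estimate $W\ge -L$) coincide with the paper's argument. For the uniform bound $A<2$, the paper proceeds in two stages: it first integrates in $x$ for fixed $y\in J'$ to obtain
\[
\int_J(-\log|x-y|)\,\frac{dx}{r}\le -\log|c-y|+(1-\log 2)<-\log|c-y|+1,
\]
using the very same symmetrization $-\log(1+s)-\log(1-s)=-\log(1-s^2)$ that you invoke, and then repeats this one-variable estimate in $y$ to pick up a second $+1$. Your argument compresses the two stages into one by passing directly to the symmetric trapezoidal density $\rho_W$ of $W=(Y-X)-s$ and using the convolution bound $L\|\rho_W\|_\infty\le L/\max(r,r')\le 1$; this yields the same constant $2-2\log 2<2$ in a single stroke. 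Both routes ultimately rest on the integral $\int_{-1}^{1}(-\log(1-t^2))\,dt=4-4\log 2$; yours is a little more streamlined, while the paper's iterated one-dimensional version makes the origin of the factor~$2$ (one unit from each interval) more visible.
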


\begin{proof}
The lower bound is implied by the Jensen's inequality: as the function $F(x,y)=-\log|x-y|$ is convex on the rectangle $J\times J'$,
$$
\iint_{J\times J'} F(x,y) \, \frac{dx}{r} \, \frac{dy}{r'} > F(c,c')= -\log |c-c'|.
$$
Now, for any $x\in J, \, y\in J'$ we have 
\begin{multline*}
- \log |x-y| = -\log |c-c'| - \log \frac{|x-y|}{|c-c'|} \\ =-\log |c-c'| - \log \left(1- \frac{|c-c'|-|x-y|}{|c-c'|}\right)
\end{multline*}
and the upper bound by $(-\log (1-\frac{r+r'}{2|c-c'|}))$ follows as it is the maximal possible value of the second term.

To get a uniform upper bound by~$2$, consider first the interaction between a uniform measure and a point charge. 
Note that for any $y\in J'$ we have 
\begin{multline*}
\int_J (-\log|x-y|) \frac{dx}{r} = -\log |c-y| - \frac{|c-y|}{r} \cdot \int_{-\frac{r/2}{|c-y|}}^{\frac{r/2}{|c-y|}} \log (1+s) \, ds = 
\\ 
= -\log |c-y| - \frac{|c-y|}{r} \cdot \int_{0}^{\frac{r/2}{|c-y|}} \log (1-s^2) \, ds ;
\end{multline*}
as the function $-\log(1-s^2)$ is monotone increasing, the maximal value of its average will be if it is averaged on the largest possible 
interval, that is, over $[0,1]$ (that corresponds to $|c-y|=r/2$, in other words, $y$ being on the boundary of $J$). In this case, a straightforward computation shows that the second term is equal to
$$
 \int_{-1}^1  (-\log (1+s)) \, \frac{ds}{2} = 1- \log 2<1.
$$
Thus, for any $y\in J'$ we have 
$$
\int_J (-\log|x-y|) \frac{dx}{r} < -\log |c-y| +1.
$$
Finally, averaging with respect to $y\in J'$, we get 
$$
\iint_{J\times J'} (-\log|x-y|) \frac{dx}{r} \frac{dy}{r'} < \int_{J'} (-\log |c-y|)\, \frac{dy}{r'} +1 < \log|c-c'| +2.
$$
\end{proof}


%

\begin{proof}[Proof of Lemma~\ref{l:outer}]
Fix an arbitrary small $\delta>0$, and let $M:=\max_{[0,1]} f(x)$. Let us decompose the sum in the left hand side of~\eqref{eq:outer} into two parts, depending on whether the centers $c_{i,n}$ and $c_{j,n}$ are closer than $\delta$ to each other:
$$
\frac{1}{n^2}\sum_{i\neq j} I (\mu_{i,n},\mu_{j,n}) = 
\frac{1}{n^2}\sum_{0<|c_{i,n}- c_{j,n}|<\delta} I (\mu_{i,n},\mu_{j,n}) + 
\frac{1}{n^2}\sum_{|c_{i,n}- c_{j,n}|\ge \delta} I (\mu_{i,n},\mu_{j,n}).
$$
Note that the first sum can be bounded by an arbitrarily small constant by choosing an appropriate $\delta>0$. Indeed, note first that 
$$
I(\mu_{i,n},\mu_{j,n})< M^2 I(\frac{1}{r_n} dx|_{J_{i,n}},\frac{1}{r_n} dx|_{J_{j,n}}).
$$
Taking $\delta<1/e^2$ and thus ensuring $-\log|c_{i,n}-c_{j,n}|>2$ once $|c_{i,n}-c_{j,n}|<\delta$, we get 
\begin{multline*}
\frac{1}{n^2}\sum_{0<|c_{i,n}- c_{j,n}|<\delta} I (\mu_{i,n},\mu_{j,n}) < \frac{1}{n^2}M^2 \sum_{0<|c_{i,n}- c_{j,n}|<\delta} I(\frac{1}{r_n} dx|_{J_{i,n}},\frac{1}{r_n} dx|_{J_{j,n}}) \\ 
< 2\frac{M^2}{n^2} \sum_{0<|c_{i,n}- c_{j,n}|<\delta} \left( -\log |c_{i,n}-c_{j,n}| \right) 
\end{multline*}
Now, for each $i$ we have 
\begin{equation}\label{eq:upper-log}
\frac{1}{n} \sum_{j: \atop  0<|c_{i,n}- c_{j,n}|<\delta} ( -\log |c_{i,n}-c_{j,n}| ) \leq \frac{2}{n} \sum_{k=1}^{[\delta n]} (-\log \frac{k}{n})  
<  2 \int_0^{\delta} (-\log s) \, ds,
\end{equation}
as the function $(-\log s)$ is decreasing on~$[0,1]$; see Fig.~\ref{fig:log}, left. Averaging~\eqref{eq:upper-log} over $i$, we get 
$$
\frac{1}{n^2}\sum_{0<|c_{i,n}- c_{j,n}|<\delta} I (\mu_{i,n},\mu_{j,n}) < 4 M^2  \int_0^{\delta} (-\log s) \, ds.
$$

\begin{figure}[!h!]
\includegraphics{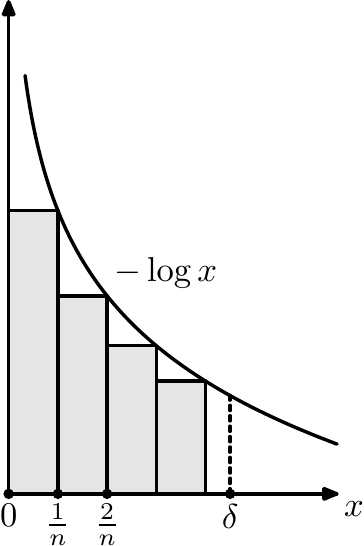} \qquad 
\includegraphics{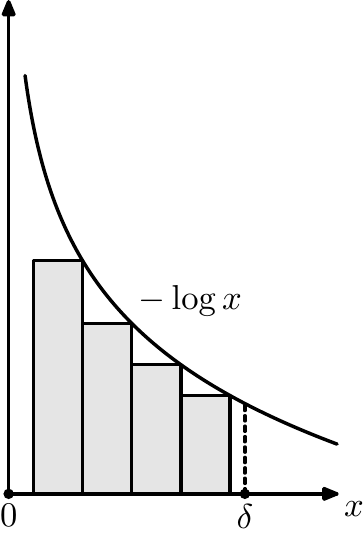}
\caption{Comparing integral sums and the integral for the $-\log x$ function: nonshifted (left) and shifted (right) sums.}\label{fig:log}
\end{figure}

As the integral in the right hand side tends to $0$ as $\delta\to 0$, for any $\eps>0$ we have
\begin{equation}\label{eq:delta-0}
\exists \delta_0>0 : \quad \forall \delta<\delta_0 \, \forall n\in \N \quad \frac{1}{n^2}\sum_{0<|c_{i,n}- c_{j,n}|<\delta} I (\mu_{i,n},\mu_{j,n}) <\eps.
\end{equation}

Now, for any fixed $\delta>0$, the function $f(x) f(y) (-\log|x-y|)$ is uniformly continuous on the subset $\{|x-y|\ge \delta\}$, and hence 
\begin{equation}\label{eq:i-limit}
\frac{1}{n^2}\sum_{|c_{i,n}- c_{j,n}|\ge \delta} I (\mu_{i,n},\mu_{j,n}) \xrightarrow[n\to\infty]{} \int_{\{|x-y|\ge \delta\}}f(x) f(y) (-\log|x-y|) \, dx\, dy.
\end{equation}
The integral in the right hand side of~\eqref{eq:i-limit} tends to $I(\mu)$ as $\delta\to 0$. Hence, for any sufficiently small $\delta$ it is $\eps$-close to $I(\mu)$. Fixing such $\delta<\delta_0$, from~\eqref{eq:i-limit} for all sufficiently large $n$ we get 
$$
\left| \frac{1}{n^2}\sum_{|c_{i,n}- c_{j,n}|\ge \delta} I (\mu_{i,n},\mu_{j,n}) - I(\mu) \right| < 2\eps,
$$
and joining it with~\eqref{eq:delta-0},
$$
\left| \frac{1}{n^2}\sum_{i\neq j} I (\mu_{i,n},\mu_{j,n}) - I(\mu) \right| < 3\eps.
$$
As $\eps>0$ was arbitrary, we get the desired 
$$
\frac{1}{n^2}\sum_{i\neq j} I (\mu_{i,n},\mu_{j,n}) = I(\mu)+o(1).
$$
This completes the proof of Lemma~\ref{l:outer}, and hence of Proposition~\ref{p.redistribution.one.level}.
\end{proof}


\begin{proof}[Proof of Proposition~\ref{p:full}]
For any interval $[a,b]$, denote $\mu_{[a,b]}$ to be the probability measure with the least energy on this interval, that is,
$$
\mu_{[a,b]}=\rho_{[a,b]}(x)\, dx, \quad \rho_{[a,b]}(x)=\frac{1}{\pi\sqrt{(x-a)(b-x)}}.
$$

By assumption of the full capacity, there exists a sequence of measures $\nu_n$, supported on $X$, such that $I(\nu_n)\to I(\mu_J)$. Upon extracting a subsequence, we can assume that this sequence of measures converges weakly. Again using the fact that passing to the weak limit 
does not increase the energy, we get 
\begin{equation}\label{eq:I-lim}
I(\lim_{n\to\infty} \nu_n) \le \lim_{n\to\infty} I(\nu_n)=I(\mu_J);
\end{equation}
as $\mu_J$ is the unique minimum of the energy function on $\Proba(J)$, we thus have $\nu_n\to\mu_J$ as $n\to\infty$. Moreover, the inequality in~\eqref{eq:I-lim} turns into an equality. And an equality in~\eqref{eq:I-lim} is equivalent to the uniform integrability of the function $-\log|x-y|$ w.r.t. these measures, that is, to
$$
\forall \eps>0 \quad \exists r>0: \quad \forall n \quad \iint_{|x-y|<r} \, \big|\log |x-y|\big| \, d\nu_n(x) \, d\nu_n(y) < \eps.
$$
(If it does take place for some $\eps>0$, the sides of the inequality in~\eqref{eq:I-lim} differ by at least~$\eps$, and vice versa.)


Now, for every $\delta>0$, take a continuous positive function $f_{\delta}\in C(J)$, supported on~$J'$, such that the measures $f_{\delta} \, dx|_{J'}$ are probability ones and converge to $\mu_{J'}$, and so do their energies: 
\begin{equation}\label{eq:I-f-delta}
I(f_{\delta} \, dx|_{J'}) \to I(\mu_{J'});
\end{equation}
it can be done in the same way as the cut-off is done on the first step of the proof of Corollary~\ref{c.union.full.cap}. These measures can then be re-written as 
$$
f_{\delta}(x)\, dx|_{J'} =\frac{f_{\delta}(x)}{\rho_J(x)} \, \rho_J(x) \, dx = \frac{f_{\delta}(x)}{\rho_J(x)} \, \mu_J;
$$
denote then $\widetilde{f}_{\delta}(x):=\frac{f_{\delta}(x)}{\rho_J(x)}$.

Consider the measures 
$$
\widehat{\mu}_{\delta,n}:= \widetilde{f}_{\delta}(x) \nu_n,
$$
and their normalized versions 
$$
\mu_{\delta,n}=\frac{1}{Z_{\delta, n}}\widehat{\mu}_{\delta,n}, \quad Z_{\delta, n}:=\widehat{\mu}_{\delta,n}(J).
$$
For each $\delta$, the measures $\widehat{\mu}_{\delta,n}$ converge weakly as $n\to \infty$ to $\widetilde{f}_{\delta}(x)\mu_{J}=f_{\delta}(x) \, dx|_{J'}$; as the limit measure is a probability one, we have 
$$
Z_{\delta,n}=\int \widetilde{f}_{\delta}(x) d\nu_n(x)  \xrightarrow[n\to \infty]{}  \int \widetilde{f}_{\delta}(x) d\mu_{J} = 1.
$$ 
Now, as the function $\widetilde{f}_{\delta}$ is bounded, the function $-\log|x-y|$ is still uniformly integrable w.r.t. these measures, and hence 
$$
I(\widehat{\mu}_{\delta,n}) \xrightarrow[n\to\infty]{} I(f_{\delta} \, dx|_{J'}). 
$$
Thus, we also have 
$$
I(\mu_{\delta,n}) = \frac{1}{Z_{\delta,n}^2} I(\widehat{\mu}_{\delta,n}) \xrightarrow[n\to\infty]{} I(f_{\delta} \, dx|_{J'})
$$
Now, passing to the limit as~$\delta\to 0$ and using~\eqref{eq:I-f-delta}, we get 
$$
\lim_{\delta\to 0} \lim_{n\to\infty} I(\mu_{\delta,n}) = I(\mu_{J'}).
$$
As the measures $\mu_{\delta,n}$ are supported on $X\cap J'$, and $\mu_{J'}$ is the least energy probability 
measure on~$J'$, we get the desired 
$$
\Ca(X\cap J')=\Ca(J').
$$
\end{proof}


\section{Phase transition}\label{s.phase}

Let us move on to prove Theorem \ref{t.Phase.transition}. The key ingredient in the sub-exponential case was that the re-distribution of $\mu_n$ on a single level, $V_n$, of a given measure $\mu$ gave us a close approximation of $I(\mu)$. If $r_n=e^{-n^\alpha}$, then Proposition \ref{p.redistribution.one.level} yields 
\begin{align*}
I(\mu_n)=I(\mu)+o(1)+\left(\int_0^1 f^2(x) +o(1) \right)n^{\alpha-1}.
\end{align*}
For $1\leq\alpha<2,$ a simple re-distribution does not suffice, as the self-interaction term has an asymptotics of $n^{\alpha-1}$ and hence does not tend to zero. The re-distribution thus will have to be done on multi-levels. Namely, let
\begin{align*}
F_m:=\{n=m,\ldots,2m-1\,:\, n\text{ is prime}\},
\end{align*}
that is, the set of prime numbers in $[m,2m-1]$, and denote by $N_m=\#F_m$ its cardinality.

Notice that $V_p$ and $V_q$ are disjoint for distinct $p,q\in F_m$. Indeed, this follows from the fact that the centers $c_{k,p}=\frac{2k+1}{2p}$ are distinct for $p\in F_m$, and that 
\begin{align*}
\left|\frac{a}{2p}-\frac{b}{2q}\right|=&\left|\frac{aq-bp}{2pq}\right|\geq\frac{1}{2m^2}> e^{-m^\alpha}.
\end{align*}

Let $\mu_n$ be the re-distribution of $\mu$ on $V_n$, where $n\in F_m$. Given a collection of positive numbers $\{p_n\}_{n\in F_m}$ such that 
\begin{align*}
\sum_{n\in F_m} p_n=1,
\end{align*}
consider a \emph{averaged re-distribution}: 
$$\mu^m=\wR_m(\mu):=\sum_{n\in F_m} p_n\mu_n,$$
that is a convex combination of measures $\mu_n$, supported on a finite union
$$
\hV_m:=\bigcup_{n\in F_m} V_n.
$$

The averaging allows to regain control on the self-interaction term. That is, the energy of the averaged measure $\mu^m$ satisfies 
\begin{equation}\label{eq:I-averaged}
I(\mu^m) = \sum_{n\in F_m} p_n^2 I(\mu_n) + \sum_{i\neq j} p_i p_j I(\mu_i,\mu_j).
\end{equation}
Take $p_i$ to be uniform: let $p_i=\frac{1}{N_m}$ for every $i\in F_m$. We have $I(\mu_n)=O(n^{\alpha-1})$, and due to the Prime Number Theorem $N_m \sim \frac{m}{\log m}$ as $m\to\infty$. Hence, the first term in \eqref{eq:I-averaged} can be estimated as
\begin{multline}\label{eq:averaged-self}
\sum_{n\in F_m} p_n^2 I(\mu_n) = \frac{1}{N_m^2} \sum_{n\in F_m} I(\mu_n) \le \frac{1}{N_m} \max_{n\in F_m} I(\mu_n)
\\
= \frac{O(m^{\alpha-1})}{m/\log m} =O\left(\frac{\log m}{m^{2-\alpha }}\right) = o(1),
\end{multline}
as $\alpha<2$.




On the other hand, we claim that the interaction energy between different $\mu_n$'s is close to the one of the initial measure $\mu$:
\begin{lemma}\label{l.otherinteraction.phase}
Let $\mu=f(x)\, dx$ be a measure with a continuous density on~$[0,1]$. Then for $n,n'\in F_m$, $n\neq n'$ we have  
\begin{align*}
I(\mu_n,\mu_{n'})= I(\mu)+o(1)
\end{align*}
(uniformly on the choice of $n$ and $n'$) as $m\rightarrow \infty$.
\end{lemma}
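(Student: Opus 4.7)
The plan is to adapt the argument of Lemma~\ref{l:outer} to the two-grid setting. First, using $\mu(V_n) = nr_n(1+o(1))$ (established in the proof of Proposition~\ref{p.redistribution.one.level}) and the analogous identity for $n'$, the measure $\mu_n$ coincides, up to a multiplicative factor $1+o(1)$, with $\frac{1}{n}\sum_i \mu_{i,n}$ where $\mu_{i,n} = \frac{1}{r_n}\mu|_{J_{i,n}}$; similarly for $\mu_{n'}$. By bilinearity,
\begin{equation*}
I(\mu_n,\mu_{n'}) = (1+o(1))\cdot\frac{1}{nn'} \sum_{i,j} I(\mu_{i,n},\mu_{j,n'}),
\end{equation*}
so it suffices to analyse this double sum.

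The crucial geometric input, relying on the primality of $n, n'\in F_m$, is that the centers $c_{i,n}=\frac{2i+1}{2n}$ and $c_{j,n'}=\frac{2j+1}{2n'}$ are either equal (only for the single pair $(i_0,j_0)$ with $2i_0+1=n$ and $2j_0+1=n'$, both producing the common center $1/2$) or satisfy $|c_{i,n}-c_{j,n'}|\ge\frac{1}{2nn'}\ge\frac{1}{8m^2}$. For the coincident pair, a direct computation using $J_{j_0,n'}\subset J_{i_0,n}$ gives $\iint_{J_{i_0,n}\times J_{j_0,n'}}(-\log|x-y|)\,dx\,dy = O(r_n r_{n'}\,|\log r_n|) = O(r_n r_{n'}\,m^\alpha)$, hence $\frac{1}{nn'}I(\mu_{i_0,n},\mu_{j_0,n'}) = O(m^\alpha/(nn'))= O(m^{\alpha-2}) = o(1)$ for $\alpha<2$.

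For every non-coincident pair, the separation $\ge \frac{1}{8m^2}$ vastly exceeds $r_n+r_{n'}\le 2e^{-m^\alpha}$, so for $x\in J_{i,n}$, $y\in J_{j,n'}$,
\begin{equation*}
-\log|x-y| = -\log|c_{i,n}-c_{j,n'}| + O\!\left(\tfrac{r_n+r_{n'}}{|c_{i,n}-c_{j,n'}|}\right) = -\log|c_{i,n}-c_{j,n'}| + O(m^2 r_n).
\end{equation*}
Combined with the uniform estimate $\mu(J_{i,n})=r_n(f(c_{i,n})+o(1))$ (from uniform continuity of $f$), the bilinear sum reduces, modulo an $o(1)$ additive error (the $O(m^2 r_n)$ correction summed against total mass is $O(m^2 r_n)\to 0$ super-exponentially), to the weighted Riemann-type sum
\begin{equation*}
\frac{1}{nn'} \sum_{(i,j)\neq(i_0,j_0)} f(c_{i,n})\,f(c_{j,n'})\,(-\log|c_{i,n}-c_{j,n'}|).
\end{equation*}

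Finally, this sum is split at $|c_{i,n}-c_{j,n'}|\ge\delta$ and $<\delta$, mimicking the proof of Lemma~\ref{l:outer}. The far-diagonal part converges to $\iint_{|x-y|\ge\delta}f(x)f(y)(-\log|x-y|)\,dx\,dy$ (the integrand is uniformly continuous on $\{|x-y|\ge\delta\}$ and both grids equidistribute on $[0,1]$), and this integral tends to $I(\mu)$ as $\delta\to 0$. The near-diagonal part, for each fixed $i$ and with $M := \max_{[0,1]}f$, is bounded by $\frac{M^2}{n'}\bigl(\log(8m^2)+2\sum_{k=1}^{\lceil\delta n'\rceil}(-\log(k/(2n')))\bigr)$: the first summand absorbs the nearest $j$ (possibly realizing the minimal separation $\frac{1}{2nn'}$) and the second is comparable to $2M^2\int_0^\delta(-\log s)\,ds$. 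Taking $m\to\infty$ first (so $\log(8m^2)/n'\to 0$), then $\delta\to 0$, yields $o(1)$, uniformly in $n,n'\in F_m$. The main obstacle is precisely this near-diagonal estimate: without primality, the minimal separation between centers on the two grids could be arbitrarily small, letting the logarithmic singularity blow up; primality provides the quantitative bound $\frac{1}{2nn'}$ that confines the worst logarithmic contribution to $O(\log m/m) = o(1)$.
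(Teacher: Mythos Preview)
Your proof is correct and follows essentially the same strategy as the paper's: reduce to the bilinear sum $\frac{1}{nn'}\sum_{i,j}I(\mu_{i,n},\mu_{j,n'})$, split at a scale~$\delta$, control the near-diagonal part via the arithmetic separation bound $|c_{i,n}-c_{j,n'}|\ge\frac{1}{2nn'}$ coming from primality, and identify the far-diagonal part as a Riemann sum for the continuous integrand $f(x)f(y)(-\log|x-y|)$. Your argument is in fact slightly more careful than the paper's: you correctly isolate the single coincident-center pair at $1/2$ (since both odd primes $n,n'$ produce the center $c_{(n-1)/2,n}=c_{(n'-1)/2,n'}=\tfrac12$), whereas the paper's claim that ``the centers $c_{k,p}$ are distinct for $p\in F_m$'' and its subsequent lower bound $\min_i|c_{i,n}-c_{j,n'}|\ge\frac{1}{2m^2}$ overlook this one pair; your estimate $\frac{1}{nn'}I(\mu_{i_0,n},\mu_{j_0,n'})=O(m^{\alpha-2})=o(1)$ plugs that small gap.
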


Postponing its proof till the end of this section, note that it immediately imples
\begin{proposition}\label{p.redistribution.multi.level}
Let $\mu=f(x)\, dx$ be a measure with a continuous density on~$[0,1]$. Then for the family of its averaged re-distributions~$\mu_m=\wR_m(\mu)$ we have 
\begin{align*}
I(\mu^m) = I(\mu)+o(1).
\end{align*}
\end{proposition}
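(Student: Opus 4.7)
The plan is to expand $I(\mu^m)$ by bilinearity of the energy form and then combine the two estimates already set up in the excerpt, namely the diagonal bound~\eqref{eq:averaged-self} and Lemma~\ref{l.otherinteraction.phase}. Concretely, starting from~\eqref{eq:I-averaged} with the uniform weights $p_n = 1/N_m$,
\[
I(\mu^m) = \underbrace{\sum_{n\in F_m} p_n^2 \, I(\mu_n)}_{=:A_m} + \underbrace{\sum_{i\neq j} p_i p_j \, I(\mu_i,\mu_j)}_{=:B_m},
\]
so the task reduces to showing $A_m = o(1)$ and $B_m = I(\mu) + o(1)$ as $m \to \infty$.

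For the diagonal part $A_m$, I would simply cite~\eqref{eq:averaged-self}: applying Proposition~\ref{p.redistribution.one.level} to $r_n = e^{-n^\alpha}$ gives $I(\mu_n) = O(n^{\alpha-1})$ (uniformly in $n \in F_m \subset [m,2m)$), so
\[
A_m \le \frac{1}{N_m}\max_{n \in F_m} I(\mu_n) = O\!\left(\frac{\log m}{m^{2-\alpha}}\right) = o(1)
\]
since $\alpha < 2$ and $N_m \sim m/\log m$ by the Prime Number Theorem.

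For the off-diagonal part $B_m$, I apply Lemma~\ref{l.otherinteraction.phase}: the lemma gives $I(\mu_i,\mu_j) = I(\mu) + o(1)$ uniformly in $i \neq j$ from $F_m$, that is, there exists a sequence $\eps_m \to 0$ (depending only on $\mu$) such that $|I(\mu_i,\mu_j) - I(\mu)| \le \eps_m$ for all distinct $i,j \in F_m$. Since there are exactly $N_m(N_m-1)$ ordered pairs with $i\neq j$ and each has weight $p_i p_j = 1/N_m^2$,
\[
B_m = \frac{N_m(N_m-1)}{N_m^2}\,I(\mu) + O(\eps_m) = \left(1 - \frac{1}{N_m}\right) I(\mu) + o(1) = I(\mu) + o(1).
\]
Adding the two estimates yields the desired $I(\mu^m) = I(\mu) + o(1)$.

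There is no serious obstacle here: all the hard work has been placed in Proposition~\ref{p.redistribution.one.level} (which controls $I(\mu_n)$) and in Lemma~\ref{l.otherinteraction.phase} (the cross terms). The only subtlety worth flagging is the \emph{uniformity} in Lemma~\ref{l.otherinteraction.phase}, since the off-diagonal bound needs an $m$-dependent error that works for all pairs simultaneously — but that is precisely what the lemma asserts. Once that uniformity is in hand, the proposition follows from a one-line algebraic combination of $A_m$ and $B_m$.
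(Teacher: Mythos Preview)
Your proposal is correct and follows exactly the paper's approach: split $I(\mu^m)$ via~\eqref{eq:I-averaged} into diagonal and off-diagonal parts, bound the diagonal by~\eqref{eq:averaged-self}, and handle the cross terms with Lemma~\ref{l.otherinteraction.phase}. Your write-up is in fact more explicit than the paper's (you track the $(1-1/N_m)$ factor and flag the uniformity issue), but the argument is the same.
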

\begin{proof}
Due to~\eqref{eq:I-averaged}, the energy $I(\mu^m)$ is the sum of two terms; the first one is~$o(1)$ due to~\eqref{eq:averaged-self}, while the second is $I(\mu)+o(1)$ due to Lemma~\ref{l.otherinteraction.phase}.
\end{proof}

We then get 

\begin{lemma}\label{l.multi.level}
Let $r_n=e^{-n^{\alpha}}$, where $\alpha<2$. Let $U\subset [0,1]$ be a finite union of intervals, and a measure $\nu=f(x) \, dx$ be a measure with a piecewise-continuous density, supported in $U$. Then for any $\eps>0$ and any $k$ there exist $m\ge k$ and a measure $\nu'$ with a piecewise-continuous density, such that
\begin{align*}
I(\nu')<I(\nu)+\eps,
\end{align*}
and the support of $\nu'$ is contained in $U\cap \hV_m$.
\end{lemma}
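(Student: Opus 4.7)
The plan is to mirror the proof of Lemma~\ref{l.iter}, but with Proposition~\ref{p.redistribution.multi.level} (the averaged multi-level re-distribution) replacing Proposition~\ref{p.redistribution.one.level}. Fix $\eps>0$ and $k\in\N$.

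First, I would upgrade $\nu$ to have a continuous density. Following the cut-off construction from the proof of Lemma~\ref{l.iter}, for each $\delta>0$ build a probability measure $\nu^{\delta}=f_{\delta}(x)\,dx$ with $f_{\delta}\in C([0,1])$, still supported in $U$, by damping $f$ linearly to $0$ on $\delta$-neighbourhoods of the discontinuity points of $f$ and of the endpoints of the components of $U$, and then normalizing. Exactly as in Lemma~\ref{l.iter}, one gets $Z_{\delta}\to 1$ and $I(\widehat{\nu}^{\delta})\to I(\nu)$ as $\delta\to 0$, hence $I(\nu^{\delta})\to I(\nu)$. Choose $\delta>0$ so small that $I(\nu^{\delta})<I(\nu)+\eps/2$.

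Next, apply Proposition~\ref{p.redistribution.multi.level} to the continuous-density measure $\nu^{\delta}$: the averaged re-distributions $\nu^{\delta,m}:=\wR_m(\nu^{\delta})$ satisfy
\[
I(\nu^{\delta,m})=I(\nu^{\delta})+o(1) \quad\text{as}\quad m\to\infty.
\]
Hence there exists $m\ge k$ with $I(\nu^{\delta,m})<I(\nu^{\delta})+\eps/2 < I(\nu)+\eps$. Set $\nu':=\nu^{\delta,m}$.

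Finally I would verify the support and regularity. By construction,
\[
\nu^{\delta,m}=\sum_{n\in F_m}\frac{1}{N_m}\,R(\nu^{\delta}\,|\,V_n),
\]
and each conditional measure $R(\nu^{\delta}\,|\,V_n)$ has the piecewise-continuous density $\frac{f_{\delta}(x)}{\nu^{\delta}(V_n)}\indicator_{V_n}(x)$, supported in $U\cap V_n$ (the denominators are positive since, for $m$ large, $V_n$ is dense enough to meet the set $\{f_{\delta}>0\}\subset U$ for every $n\in F_m$). Being a finite convex combination of such densities, $\nu'$ has a piecewise-continuous density and is supported in $U\cap \hV_m$, as required. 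I don't anticipate any real obstacle: the only non-routine input is Proposition~\ref{p.redistribution.multi.level}, which has already been established, and the cut-off smoothing is exactly the same as in Lemma~\ref{l.iter}.
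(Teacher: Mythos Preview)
Your proof is correct and follows essentially the same approach as the paper's own proof: smooth the density via the cut-off from Lemma~\ref{l.iter} to get $I(\nu^{\delta})<I(\nu)+\eps/2$, then apply Proposition~\ref{p.redistribution.multi.level} to find $m\ge k$ with $I(\wR_m(\nu^{\delta}))<I(\nu)+\eps$. You have actually written out more detail than the paper (which dispatches the lemma in two sentences), including the explicit verification of support and piecewise-continuity of the density of~$\nu'$.
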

\begin{proof}
As in the proof of Lemma~\ref{l.iter}, we can find a measure $\nu_{\delta}=f_{\delta}(x)\, dx$ with continuous density on $[0,1]$, such that $\supp \nu_{\delta}\subset \supp \nu$ and that $I(\nu_{\delta})<I(\nu)+\frac{\eps}{2}$. Applying Proposition~\ref{p.redistribution.multi.level} to $\mu=\nu_{\delta}$ concludes the proof.
\end{proof}

\begin{proof}[Proof of Theorem~\ref{t.Phase.transition}]
We now deduce Theorem~\ref{t.Phase.transition} from Lemma~\ref{l.multi.level} in exactly the same way, as earlier we have deduced Theorem~\ref{t.subexponential} from Lemma~\ref{l.iter}. Namely, for any $\eps>0$ we iterate the re-distribution procedure, obtaining a family of measures $\nu_k$ with continuous density on~$[0,1]$, for which we control both the supports and the energy. 

To do so, we start with the measure $\nu_0$ that is supported on~$[0,1]$ and that satisfies $I(\nu_0)<I(\muo)+\frac{\eps}{2}$. Now, if a measure $\nu_{k-1}$ is already constructed, due to Lemma~\ref{l.multi.level} there exists a measure $\nu_{k}$ with 
$$
I(\nu_{k})<I(\nu_{k-1})+\frac{\eps}{2^{k+1}} \, \text{ and } \, \supp \nu_{k}\subset \supp \nu_{k-1} \cap \hV_{m_k}
$$
for some $m_k>k$. Any accumulation point $\nu_{\infty}$ of the measures $\nu_k$ is thus supported on a intersection of closures 
$$
\bigcap_k  cl \left( \hV_{m_k} \right) \subset S \cup D,
$$
where $D$ is a countable set of endpoints of $V_n$'s, and satisfies 
$$
I(\nu_{\infty}) < (I(\muo)+\frac{\eps}{2}) + \sum_{k=1}^{\infty} \frac{\eps}{2^{k+1}} = I(\muo)+\eps.
$$
As a finite energy measure, the measure $\nu_{\infty}$ does not charge a countable set $D$, and is thus supported on~$S$. As $\eps>0$ was arbitrary, we thus get 
$$
\inf_{\nu\in \Proba(S)} I(\nu) =I(\muo),
$$
and hence the desired $\Ca(S)=\Ca([0,1])$.
\end{proof}

We conclude this section with the proof of Lemma~\ref{l.otherinteraction.phase}.
\begin{proof}[Proof of Lemma~\ref{l.otherinteraction.phase}]
As in the proof of Lemma~\ref{l:outer}, fix an arbitrarily small $\delta>0$, and decompose 
$$
I(\mu_n,\mu_{n'})= \frac{1}{n\cdot n'}\sum_{i=0}^{n-1} \sum_{j=0}^{n'-1} I(\mu_{i,n},\mu_{j,n'})
$$
into two parts, depending on the distance $|c_{i,n}-c_{j,n'}|$:
\begin{equation}\label{eq:I-n-np}
I(\mu_n,\mu_{n'})= \frac{1}{nn'}\sum_{|c_{i,n}-c_{j,n'}|<\delta} I(\mu_{i,n},\mu_{j,n'}) + \frac{1}{nn'}\sum_{|c_{i,n}-c_{j,n'}|\ge \delta} I(\mu_{i,n},\mu_{j,n'}).
\end{equation}
The sum over intervals whose centers are closer than $\delta$ from each other can be made arbitrarily small by a choice of~$\delta$ and by taking sufficiently large~$m$. Indeed, for any fixed $j$ we have 
\begin{multline}\label{eq:delta-indiv}
\frac{1}{n}\sum_{i: \, |c_{i,n}-c_{j,n'}|<\delta} (-\log |c_{i,n}-c_{j,n'}|) < \\ < -\frac{2}{n} \log \min_{i} |c_{i,n}-c_{j,n'}| + 2\int_0^{\delta} (-\log s) \, ds,
\end{multline}
see Fig.~\ref{fig:log}, right. Due to the estimates above the minimal distance $\min_{j} |c_{i,n}-c_{j,n'}|$ is at least $\frac{1}{2m^2}$, so the first summand does not exceed $\frac{2}{m} \log (2m^2)$ and hence tends to~$0$. The second can be made arbitrarily small due to the integrability of the function~$\log$ at~$0$. Finally, averaging~\eqref{eq:delta-indiv} over~$j$, we get the desired (arbitrarily small) bound for the first summand in~\eqref{eq:I-n-np}.

On the other hand, for any fixed $\delta$, the function $f(x) f(y) (-\log |x-y|)$ is continuous on the set $|x-y|\ge\delta$, 
and the second summand in~\eqref{eq:I-n-np} behaves like its Riemann sum. Hence, we have 
$$
\frac{1}{nn'}\cdot \sum_{|c_{i,n}-c_{j,n'}|\ge \delta} I(\mu_{i,n},\mu_{j,n'}) \to \iint_{|x-y|\ge\delta}f(x) f(y) (-\log |x-y|) \, dx \, dy
$$
uniformly in $n,n'\in F_m$ as $m\to \infty$.

For any $\eps>0$, take $\delta$ sufficiently small so that the integral in the right hand side is $\frac{\eps}{2}$-close to $I(\mu)$, and that the first summand in~\eqref{eq:I-n-np} does not exceed $\frac{\eps}{2}$ for all sufficiently large~$m$. Then, we have 
$$
\left| I(\mu_n,\mu_{n'}) - I(\mu) \right| < \frac{\eps}{2} + \frac{\eps}{2} = \eps, 
$$
and as $\eps>0$ is arbitrary, this concludes the proof.
\end{proof}





\section{Zero capacity: Lindeberg and Erd\"os-Gillis theorems}\label{s.zero.capacity}


This section is devoted to the Cauchy-Schwartz based proof of Corollary~\ref{t.alpha>2.zero.cap}, as well as of Lindeberg and  
Erd\"os-Gillis' Theorem~\ref{t:E-G}, and hence of the first part of 
Theorem~\ref{t.Phase.transition}. The key step is the following.

\begin{lemma}\label{l:I-ints} 
Let $J_1',J_2,\ldots \subset [0,1]$ be a sequence of intervals of length $|J_k'|=:r'_k$, such that the series $\sum_{k=1}^\infty \frac{1}{|\log r_k|}$ converges. Then
\begin{align*}
I(\mu)\geq \frac{1}{\sum_{k=m}^\infty 1/|\log r_k'| }.
\end{align*}

\end{lemma}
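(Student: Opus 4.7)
The plan is to decompose $\mu$ according to the covering $\{J_k'\}_{k \geq m}$, apply the elementary capacity lower bound on each piece, and then combine the resulting diagonal contributions via Cauchy--Schwartz.

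First, I would disjointify the cover: set $J_k'' := J_k' \setminus \bigcup_{m \leq j < k} J_j'$, so that the $J_k''$ are pairwise disjoint, $J_k'' \subset J_k'$, and their union agrees with $\bigcup_{k \geq m} J_k'$. Write $\mu_k := \mu|_{J_k''}$ and $p_k := \mu(J_k'')$, so $\sum_{k \geq m} p_k = 1$. Since $\supp \mu \subset [0,1]$, the integrand $-\log|x-y|$ is non-negative on $[0,1]\times[0,1]$; hence in the decomposition
\[ I(\mu) = \sum_{k,l \geq m} I(\mu_k, \mu_l) \]
every off-diagonal term is non-negative, yielding $I(\mu) \geq \sum_{k \geq m} I(\mu_k)$.

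Next, write $I(\mu_k) = p_k^2 \, I(\widetilde{\mu}_k)$, where $\widetilde{\mu}_k := \mu_k / p_k$ is a probability measure supported in $J_k'' \subset J_k'$ (terms with $p_k = 0$ may be omitted). The capacity of an interval of length $r \leq 1$ equals $r/4$, and capacity is monotone, so $\Ca(J_k'') \leq r_k'/4$, which gives
\[ I(\widetilde{\mu}_k) \;\geq\; -\log \Ca(J_k'') \;\geq\; \log(4/r_k') \;\geq\; |\log r_k'|. \]
Substituting produces the key inequality
\[ I(\mu) \;\geq\; \sum_{k \geq m} p_k^2 \, |\log r_k'|. \]

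Finally, Cauchy--Schwartz applied to the vectors $\bigl(p_k\sqrt{|\log r_k'|}\bigr)_k$ and $\bigl(1/\sqrt{|\log r_k'|}\bigr)_k$ gives
\[ 1 \;=\; \Bigl(\sum_{k \geq m} p_k\Bigr)^2 \;\leq\; \Bigl(\sum_{k \geq m} p_k^2 |\log r_k'|\Bigr) \cdot \Bigl(\sum_{k \geq m} \tfrac{1}{|\log r_k'|}\Bigr), \]
from which the claimed bound follows at once. There is no real obstacle; the only points requiring care are (i) staying inside $[0,1]$ so that the cross terms $I(\mu_k,\mu_l)$ are non-negative and may be discarded, (ii) disjointifying the cover to avoid double-counting the mass carried by overlapping $J_k'$, and (iii) choosing the weights in Cauchy--Schwartz so that the product of the two sums telescopes to $1$ on the left.
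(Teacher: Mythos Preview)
Your proof is correct and follows essentially the same route as the paper's: disjointify the covering, drop the off-diagonal terms by non-negativity of $-\log|x-y|$ on $[0,1]^2$, bound each diagonal term below using $I(\widetilde{\mu}_k)\ge |\log r_k'|$, and finish with Cauchy--Schwartz. The only cosmetic difference is that the paper obtains $I(\widetilde{\mu}_k)\ge |\log r_k'|$ directly from the pointwise bound $-\log|x-y|\ge -\log r_k'$ for $x,y\in J_k'$, whereas you route through the capacity $\Ca(J_k')=r_k'/4$; both arguments are equally valid.
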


\begin{proof}
We transform the union $\bigcup_{k=1}^\infty J_k'$ into a disjoint one by setting
\begin{align*}
\tilde{V_1}:=J_1', \quad \tilde{V_k}:=J_k'\setminus \bigcup_{i=1}^{k-1}J_i'.
\end{align*}
Let $\mu$ be any measure supported on $\bigcup_{k=1}^\infty J_k'$; denote $p_k:=\mu(\tilde{V_k})$. Then $\sum_{k}p_k= \mu( \bigcup_{k=1}^\infty J_k')=1.$ Without loss of generality, we can assume $p_k>0$ for all $k$, otherwise removing the corresponding $J_k'.$ 

Let $\mu_k':= \frac{1}{p_k}\left. \mu \right|_{\widetilde{V}_k}$ be the corresponding conditional measures. Then, 
\begin{align*}
\mu=\sum_k p_k\mu_k',
\end{align*}
and thus
\begin{align*}
I(\mu)=\sum_{k,l} p_kp_l I(\mu_k',\mu_l')\geq \sum_k p_k^2 I(\mu_k').
\end{align*}
Now, the measures $\mu_k$ is supported on $J_k'$, that is an interval of length $r_k'$, and hence $I(\mu_k')\geq |\log r_k'|.$ Thus, 
\begin{align*}
I(\mu)\geq \sum_{k}p_k^2|\log r_k'|.
\end{align*}
Applying Cauchy-Schwartz inequality, we get 
\begin{multline*}
\left( \sum_k p_k^2 |\log {r_k'}|\right)\left( \sum_k \frac{1}{|\log r_k'|}\right)\ge
\\
\ge \left( \sum _k \sqrt{p_k^2|\log r_k'|\cdot \frac{1}{|\log r'_k|}} \right)^2= \left( \sum_k p_k\right)^2=1,
\end{multline*}
and hence
\begin{equation}\label{eq:I-bound}
I(\mu) \geq \sum_k p_k^2 |\log r_k' |\geq \frac{1}{\sum_k \frac{1}{|\log r_k'|}}
\end{equation}

\end{proof}

This lemma immediately implies Theorem \ref{t.alpha>2.zero.cap}.  Indeed, for any $m$ the set~$\tilde{S}$ is contained in $\bigcup _{k\geq m}J_k'$, and hence,
\begin{align*}
\Ca(\tilde{S}) \leq \Ca\left( \bigcup _{k\geq m}J_k'\right)\leq \exp\left( -\frac{1}{\sum_{k=m}^\infty 1/|\log r_k'|}\right).
\end{align*}
As $m$ is arbitrary, and the tail sum of a convergent series tends to zero, passing to the limit as $m\rightarrow \infty$ we get the desired
\begin{align*}
\Ca(\tilde{S})=0.
\end{align*}

\begin{proof}[Proof of Theorem~\ref{t:E-G}.]
Assume that $m_{h_0}(E)=R<\infty$. Then for an arbitrarily small $\eps>0$ there exists its cover $\bigcup_j I_j \supset E$ by intervals of length at most $\eps$, such that $\sum_j h_0(|I_j|)=\sum_j \frac{1}{|\log |I_j| \, |} <2R$. Estimate~\eqref{eq:I-bound} then implies, that for any measure $\mu$ on $E$ one has $I(\mu)\ge \frac{1}{2R}$. Moreover, actually~\eqref{eq:I-bound} is a lower bound for the part of the integral $\eps$-close to the diagonal (as $x$ and $y$ can be restricted to belong to the same interval):
\begin{equation}\label{eq:int-eps}
\iint_{|x-y|<\eps} \log |x-y| \, d\mu(x) \, d\mu(y) > \frac{1}{2R}.
\end{equation}
Recall now that $\eps>0$ was arbitrary; if there was a measure $\mu$ on $E$ with $I(\mu)<\infty$, the left hand side of~\eqref{eq:int-eps} 
would tend to zero as $\eps\to 0$. On the other hand, the right hand side is a constant. This contradiction shows that for any measure $\mu$ on $E$ one has $I(\mu)=+\infty$, and thus that $\Ca(E)=0$.
\end{proof}
\begin{remark}
Actually, the statements Lemma~\ref{l:I-ints} and Theorem~\ref{t:E-G} hold in any dimension, with balls replacing the intervals and their diameters taken instead of lengths, and the proofs are the same word for word.
\end{remark}

\begin{proof}[Proof of the first part of Theorem \ref{t.Phase.transition}]
Take the sequence $J_k'$ to be an enumeration of the family $J_{k,n}$. Then,
\begin{align*}
\bigcap_{m=1}^\infty\bigcup_{k=m}^\infty J_k' = \bigcap_{m=1}^\infty\bigcup_{n=m}^\infty V_n;
\end{align*}
for each $n$ there are $n$ intervals $J_k'$ of length $r_n$ (that is, $J_{0,n},\ldots,J_{n-1,n})$, and hence
\begin{equation}\label{e.series.capzero}
\sum_{k=m}^\infty \frac{1}{|\log r_k '|}=\sum_{n=m}^\infty \frac{n}{|\log r_n |}=\sum_{n=m}^\infty \frac{1}{n^{\alpha-1}}
\end{equation}
as $r_n=e^{-n^{\alpha}}$. As for $\alpha>2$ the series \eqref{e.series.capzero} converges, 
$\Ca (S)=0$ due to Theorem \ref{t.alpha>2.zero.cap}.
\end{proof}


\section{Non-continuity of capacity on bounded interval}\label{s.counterexample}

As mentioned previously, in the proof of Theorem \ref{t.subexponential}, there is a tempting shortcut that cannot be taken. It is already known that capacity does not satisfy limit properties that a measure does. In particular, it is not continuous under descending collection of sets. For example, one can take the collection of open bounded sets

$$O_n:=\{z\in \C: 1-\frac{1}{n}<\Im(z)<1 \text{ and } 0<\Re(z)<1\}.$$

\begin{figure}[!h!]
\includegraphics[scale=0.85]{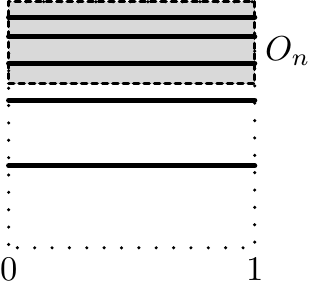} \qquad 
\includegraphics[scale=0.85]{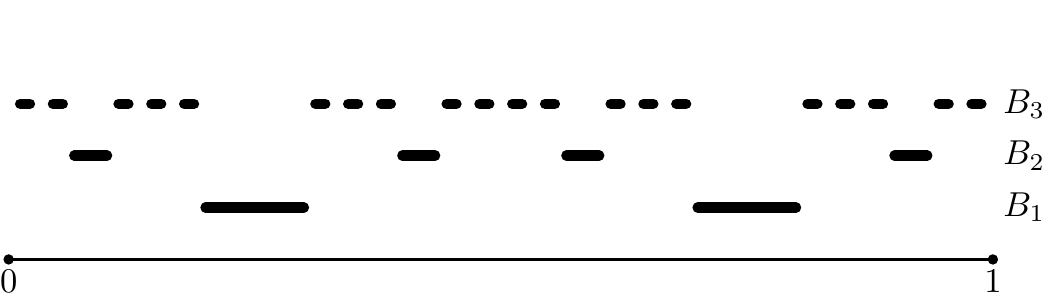}
\caption{Discontinuity of capacity on $[0,1]^2$ and on $[0,1]$}\label{fig:disjoint}
\end{figure}

Then, each $O_n$ contains a translation of the interval $(0,1)$; see Fig.~\ref{fig:disjoint}, left. Hence, $\Ca(O_n)\geq 1/4$. If capacity was continuous on descending open sets, we would have that
\begin{align*}
1/4 \leq \lim_{n\rightarrow \infty} \Ca(O_n)=\Ca\left(\bigcap _{n\in\N} O_n\right)=\Ca(\emptyset)=0.
\end{align*}

A question that appears naturally is whether capacity was continuous under a descending collection of open sets contained in~$[0,1]$? If so, Corollary~\ref{c.union.full.cap} would imply that $\Ca(S)= \Ca(J)$. Unfortunately, the answer to the continuity question on $[0,1]$ is negative, as one can see from the following example.
\begin{example}\label{p.not.continuous}
There exist pairwise disjoint open sets $B_1, B_2, \ldots$ contained in $[0,1]$ with capacity bounded away from $0$. In other words, there exists $\eps>0$ such that 
\begin{align*}
\Ca(B_n)\geq \eps,
\end{align*}
for any $n\in\N$. 
\end{example}

\begin{example}\label{e.main}
There exists a descending sequence $W_1\supset W_2\supset \ldots$ of open sets contained in $[0,1]$ such that
\begin{align*}
\Ca\left(\bigcap_{n\in\N}W_n\right)=0<\eps\leq \Ca\left(W_k\right),
\end{align*}
for some $\eps$ and every $k\in\N$.
\end{example}

Construction of Example \ref{e.main} out of Example \ref{p.not.continuous} is immediate: take 
\begin{align*}
W_m:=\bigcup_{n\geq m}B_n,
\end{align*}
where $B_n's$ are given by Example~\ref{p.not.continuous}. Indeed, one then has $\bigcap_n W_n=\emptyset$, $W_1\supset W_2 \ldots$ by construction, as well as $\Ca (W_n) \geq \Ca(B_n)\geq \eps$. This example shows the discontinuity of example on descending sequence of open subsets of $[0,1]:$ one has $\Ca (\bigcap_n W_n)=0$ while $\lim_{n\rightarrow \infty}  \Ca(W_n)\geq \eps$. Let us pass to the construction proving Example \ref{p.not.continuous}. 

To construct the desired sets $B_n$, consider the unions $V_n=\cup_{i}J_{i,n}$ given by \eqref{eq:V-def}, taking the decreasing speed for the lengths $r_n:=2^{-n}$. Take a subsequence $n_k$ of indices to be defined by $n_1=2^{10},n_k=2^{n_{k-1}+1}$, and define (see Fig.~\ref{fig:disjoint}, right)
\begin{align*}
B_k:=V_{n_k}\setminus \bigcup_{i=1}^{k-1}\overline{V}_{n_i}.
\end{align*}

The sets $B_k$ are then open and disjoint by construction. To show that they satisfy the conclusion of the proposition, it suffices to find probability measures $\nu_k$, supported on $B_k$, such that the energies $I(\nu_k)$ are uniformaly bounded. That is, there exists $C$ such that for all $k$ one has $I(\nu_k)\leq C$. This implies $\Ca (B_k)\geq e^{-C}$, and thus the conclusion of the proposition holds with $\eps=e^{-C}$. 

To do so, first consider the uniform measures $\nu_k^\circ$ on $V_{n_k}$, letting $\nu_k^\circ:= R(\leb|V_{n_k})$, where $\leb$ is the Lebesgue measure on~$[0,1]$. Due to Proposition~\ref{p.redistribution.one.level},

\begin{align*}
I(\nu_{k}^\circ)=I(\leb) + \frac{n_k \log 2}{n_k} +o(1)= 3/2 +\log 2 + o(1).
\end{align*}

Now, let 
\begin{align*}
\nu_k:= \frac{\left.\nu_k^\circ\right|_{B_k}}{ \nu_k^\circ (B_k)  }.
\end{align*}
Then,

\begin{align*}
I(\nu_k):= \frac{1}{ \nu_k^\circ (B_k)^2 }I(\nu_k^\circ),
\end{align*}
so it suffices to check that $\nu_k^\circ(B_k)$ stays bounded away from zero. In fact, we will show that $\nu_k^\circ(B_k)\geq 1/2$. This will follow from a purely geometrical observation: 

\begin{lemma}
\begin{align*}
\leb(V_{n_k}\cap X_k )=\leb(X_k)\cdot\leb(V_{n_k}),
\end{align*}
where 
$$X_k:=[0,1]\setminus \bigcup_{i=1}^{k-1} V_{n_i}.
$$
\end{lemma}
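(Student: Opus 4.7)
The approach will hinge on the fact that the chosen sequence $n_1 = 2^{10}$, $n_k = 2^{n_{k-1}+1}$ makes every $n_k$ a power of $2$, producing an \emph{exact} arithmetic compatibility between the scale $1/n_k$ of $V_{n_k}$ and the scales $1/n_i$, $2^{-n_i}$ appearing in $V_{n_i}$ for $i<k$. I plan to partition $[0,1]$ into $n_k$ closed cells $C_j := [j/n_k,(j+1)/n_k]$, each centered at $c_{j,n_k}$, show that $X_k$ is a union of such cells up to a null set, and conclude the identity by counting.

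First, since $r_{n_k} = 2^{-n_k}$ is vastly smaller than $1/n_k$, each interval $J_{j,n_k}$ lies strictly inside its cell $C_j$. The main arithmetic step will be to verify that every endpoint $(2i'+1)/(2n_i) \pm 2^{-n_i-1}$ of an interval $J_{i',n_i}$ with $i<k$ lies on the cell grid $\{\ell/n_k\}$. Writing $n_k/n_i$ as a positive \emph{even} integer (since both $n_k$ and $n_i$ are powers of $2$ with $n_k > n_i$), one obtains $(2i'+1)/(2n_i) = \bigl[(2i'+1)(n_k/n_i)/2\bigr] \cdot (1/n_k)$, an integer multiple of $1/n_k$. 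Likewise $2^{-n_i-1}$, expressed in units of $1/n_k = 2^{-n_{k-1}-1}$, equals $2^{n_{k-1}-n_i}$, a non-negative integer since $n_i \le n_{k-1}$. Hence both endpoints of $J_{i',n_i}$ sit on the cell grid.

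It follows that every $V_{n_i}$ for $i<k$ is, up to boundary points of measure zero, an exact union of cells, and so is $X_k = [0,1]\setminus\bigcup_{i<k} V_{n_i}$. Letting $\mathcal{J}_k$ denote the set of indices $j$ with $C_j \subset X_k$ (modulo endpoints), one has $\leb(X_k) = |\mathcal{J}_k|/n_k$. Because each $J_{j,n_k}$ lies strictly inside its cell, we have $J_{j,n_k}\subset X_k$ if and only if $j \in \mathcal{J}_k$, and therefore
\[
\leb(V_{n_k}\cap X_k) \;=\; |\mathcal{J}_k|\cdot r_{n_k} \;=\; n_k \leb(X_k)\cdot r_{n_k} \;=\; \leb(V_{n_k})\cdot \leb(X_k).
\]
The only delicate point, and the real content of the lemma, is the grid-alignment verification in the second step; this is exactly where the particular choice $n_k = 2^{n_{k-1}+1}$ (together with $n_1 = 2^{10}$) enters the proof, ensuring exact equality rather than mere approximation.
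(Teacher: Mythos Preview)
Your proof is correct and follows essentially the same approach as the paper: both arguments verify that every endpoint of $V_{n_i}$ for $i<k$ lies on the grid $\{a/n_k\}$, conclude that $X_k$ is (up to a null set) a union of cells $[a/n_k,(a+1)/n_k]$, and then count the equal contribution $r_{n_k}$ of $V_{n_k}$ in each cell. Your arithmetic verification of the grid alignment is a bit more explicit than the paper's, but the structure of the argument is the same.
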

\begin{proof} Note that all the endpoints of $V_{n_i}$, $i=1,\ldots,k-1$ are of the form 
\begin{align*}
\frac{2j+1}{2n_i}\pm \frac{r_{n_i}}{2}=\frac{2j+1}{2^{n_{i-1}+2}}\pm \frac{1}{2^{n_i+1}},
\end{align*}
and hence can be represented as 
\begin{align*}
\frac{a}{2^{n_{k-1}+1}}=\frac{a}{n_k}.
\end{align*}
Hence, $X_k$ is (up to a finite number of points) a union of intervals of the form

\begin{align}\label{e.blocks}
\left(\frac{a}{2^{n_{k-1}+1}},\frac{a+1}{2^{n_{k-1}+1}} \right)=\left(\frac{a}{{n_k}},\frac{a+1}{n_k} \right).
\end{align}
We have 
$$
X_k=\bigcup_{a\in\mathcal{A} } \left(\frac{a}{{n_k}},\frac{a+1}{n_k} \right)\cup P,
$$
where $P$ consists of a finite number of points.
\begin{figure}[!h!]
\includegraphics[width=10cm]{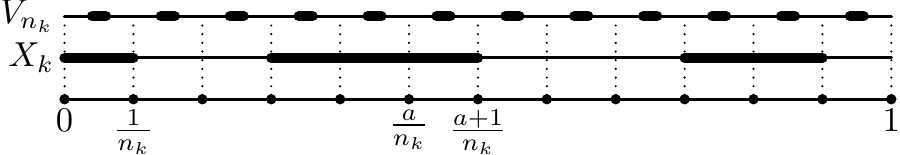}
\caption{The set $V_{n_k}$ and the decomposition into dyadic intervals}
\end{figure}

 On each interval of the form \eqref{e.blocks}, the set $V_{n_k}$ cuts the same measure:
 $$\leb (V_{n_k}\cap [\frac{a}{n_k},\frac{a+1}{n_k}])=r_{n_k}
 $$ 
 and thus the same proportion ${r_{n_k}}\cdot{n_k}$. Hence, 
\begin{align*}
\leb (V_{n_k}\cap X_k)=r_{n_k} \cdot \#\mathcal{A}=\left({r_{n_k}}\cdot{n_k}\right)\left(\frac{\#\mathcal{A}}{ n_k}\right)=\leb(V_{n_k})\cdot\leb(X_k).
\end{align*}
\end{proof}

Due to this lemma, $\nu_k^\circ(B_k)=\leb (X_k).$ On the other hand, 
$$\leb(X_k)\geq 1-\sum_{i=1}^{k-1}\leb (V_{n_i})\geq 1/2.$$
We have obtained the desired $\nu_k^\circ(B_k)\geq 1/2,$ and hence 
$$I(\nu_k)\leq 4(3/2 +\log 2 +o(1)),$$ 
thus concluding the construction.



\begin{thebibliography}{99}

\bibitem{Car} {\sc L. Carleson}, On the connection between Hausdorff measures and capacity, \emph{Arkiv f\"or Matematik},
\textbf{3}:5 (1958), pp.~403--406.

\bibitem{De} {\sc J. Deny}, Sur les infinis d'un potentiel, \emph{C. R. Acad. Sci. Paris} \textbf{224} (1947), pp. 524--525.

\bibitem{E-G} {\sc P. Erd\"os, J. Gillis}, Note on the Transfinite Diameter, \emph{Journal of the London Mathematical Society}, \textbf{12}:3 (1937), pp.~185--192.
%


\bibitem{GK} {\sc A.~Gorodetski, V.~Kleptsyn}, Parametric Furstenberg Theorem on Random Products of $SL(2,\R)$ matrices, preprint, arXiv:1809.00416.

\bibitem{He} {\sc L. Helms}, Potential Theory, Springer, 2009.

\bibitem{Lin} {\sc J. W. Lindeberg}, Sur l'existence des fonctions d'une variable complexe et des fonctions 
harmoniques born\'ees, \emph{Ann. Acad. Scient. Fenn.}, \textbf{11}:6 (1918).
%
\bibitem{Myr} {\sc P. J. Myrberg}, \"Uber die Existenz Der Greenschen Funktionen auf Einer Gegebenen Riemannschen Fl\"ache, 
\emph{Acta Math.}, \textbf{61} (1933), pp.~39--79.
%
\bibitem{Nev} {\sc R. Nevanlinna}, \"Uber die Kapazit\"at der Cantorschen Punktmengen, \emph{Monatshefte f\"ur Mathematik und Physik}, \textbf{43} (1936), pp.~435--447.
%
%



\bibitem{Ra} {\sc T. Ransford}, Potential Theory in the Complex Plane, Cambridge University Press, 1995.


\bibitem{Si} {\sc B. Simon}, Equilibrium Measures and Capacities in Spectral Theory, \emph{Inverse Problems and Imaging}, \textbf{1}:4 (2007), pp.~713--772.

\bibitem{Urs} {\sc Ursell, H.}, Note on the Transfinite Diameter, \emph{Journal of the London Mathematical Society}, (1976).




\end{thebibliography}
\end{document}